\documentclass[leqno]{amsart}
\usepackage{amsmath,amsfonts,amsthm,amssymb,indentfirst,epic,url}

\setlength{\textwidth}{6.5in}
\setlength{\textheight}{9.00in}
\setlength{\evensidemargin}{0in}
\setlength{\oddsidemargin}{0in}
\setlength{\topmargin}{-.5in}
\sloppy

\setlength{\mathsurround}{1.67pt}

\newtheorem{theorem}{Theorem}
\newtheorem{lemma}[theorem]{Lemma}
\newtheorem{corollary}[theorem]{Corollary}
\newtheorem{proposition}[theorem]{Proposition}
\newtheorem{definition}[theorem]{Definition}

\newcommand{\Z}{\mathbb Z}
\newcommand{\U}{\mathcal{U}}
\newcommand{\V}{\mathbf{V}}
\newcommand{\F}{\mathcal{F}}
\renewcommand{\r}{\mathrm}
\newcommand{\card}{\r{card}}

\raggedbottom

\begin{document}

\begin{center}
\texttt{Comments, corrections,
and related references welcomed, as always!}\\[.5em]
{\TeX}ed \today
\vspace{2em}
\end{center}

\title%
[Families of ultrafilters and homomorphisms on products]%
{Families of ultrafilters, and homomorphisms\\
on infinite direct product algebras}
\thanks{
This preprint is readable online at
\url{http://math.berkeley.edu/~gbergman/papers/}.
}

\subjclass[2010]{Primary: 03C20, 03E75, 17A01, 
Secondary: 03E55, 08B25, 16S60, 20A15.}
\keywords{ultrafilter; measurable cardinal;
homomorphism on an infinite direct product of groups
or $\!k\!$-algebras; slender module, Erd\H{o}s-Kaplansky Theorem.
}

\author{George M. Bergman}
\address{University of California\\
Berkeley, CA 94720-3840, USA}
\email{gbergman@math.berkeley.edu}

\begin{abstract}
Criteria are obtained for a filter $\F$ of subsets of a
set $I$ to be an intersection of finitely many ultrafilters,
respectively, finitely many $\!\kappa\!$-complete ultrafilters
for a given uncountable cardinal $\kappa.$
From these, general results are deduced concerning homomorphisms
on infinite direct product groups, which yield quick proofs of
some results in the literature: the {\L}o\'{s}-Eda
theorem (characterizing homomorphisms from a not-necessarily-countable
direct product of modules to a slender module), and some results of
N.\,Nahlus and the author on homomorphisms on infinite direct
products of not-necessarily-associative $\!k\!$-algebras.
The same tools allow other results of Nahlus and the author to
be nontrivially strengthened, and yield
an analog to one of their results, with nonabelian groups
taking the place of $\!k\!$-algebras.

We briefly examine the question of how the
common technique used in applying the general results of this note
to $\!k\!$-algebras on the one
hand, and to nonabelian groups on the other,
might be extended to more general varieties of algebras in the
sense of universal algebra.

In a final section, the Erd\H{o}s-Kaplansky Theorem on dimensions
of vector spaces $D^I$ $(D$ a division ring)
is extended to reduced products $D^I/\F,$ and an application is noted.
\end{abstract}
\maketitle

\section{Results on filters and ultrafilters}\label{S.cap_ultra}

The definition below recalls some standard concepts.
Readers not familiar with some of these might skim those
parts of the definition
now, and return to them as one or another concept is called on.
(For a thorough development of ultrafilters and
related topics, see works such as \cite{C+N} or~\cite{Drake}.)

\begin{definition}\label{D.filter&&}
If $I$ is a set, then a \emph{filter} on $I$ means a
set $\F$ of subsets of $I,$ such that
\textup{(i)}~$I\in\F,$
\textup{(ii)}~if $J\in\F$ and $J\subseteq K\subseteq I,$ then $K\in\F,$
and \textup{(iii)}~if $J,\,K\in\F,$ then $J\cap K\in\F.$
A filter $\F$ on $I$ is \emph{proper} if it is not the set
of all subsets of $I,$ equivalently, if $\emptyset\notin\F.$
A filter $\F$ on $I$ is \emph{$\!\kappa\!$-complete,}
for $\kappa$ an infinite cardinal, if $\F$ is closed under
intersections of families of $<\kappa$ elements.
\textup{(}Thus, every filter is $\!\aleph_0\!$-complete.\textup{)}
A filter which is $\!\aleph_1\!$-complete, i.e., closed
under countable intersections, is called
\emph{countably complete}.

A maximal proper filter on $I$ is called an \emph{ultrafilter}.
An ultrafilter of the form $\{J\subseteq I\mid i_0\in J\}$ for some
$i_0\in I$ is called \emph{principal}; all other ultrafilters
are called \emph{nonprincipal}.

An infinite cardinal $\kappa$ is called \emph{measurable} if
there exists a nonprincipal $\!\kappa\!$-complete ultrafilter
on $\kappa.$
\end{definition}

The use to which filters will be put in this note
arises from the following observation.

\begin{lemma}\label{L.A->C}
Let $I$ be a set, $(A_i)_{i\in I}$ an $\!I\!$-indexed family
of nonempty sets, and $h:\prod_I A_i\to C$ a map
from their direct product to another set.
Then the set
\begin{equation}\begin{minipage}[c]{37pc}\label{d.A->C}
$\F\,=\,\{J\subseteq I\mid h$ factors
as $\prod_{i\in I} A_i\to \prod_{i\in J} A_i\to C,$
where the first map is the natural projection$\}$
\end{minipage}\end{equation}
is a filter on $I.$

Conversely, given any filter $\F$ on $I,$ and any $\!I\!$-indexed
family $(A_i)_{i\in I}$ of sets each having more than one
element, there exists a map $h$ from $\prod_I A_i$ to a set $C,$
such that the filter $\F$ is given by\textup{~(\ref{d.A->C})}.
\end{lemma}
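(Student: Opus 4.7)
The plan is to handle the two directions separately, using as a bridge the simple observation that, since each $A_i$ is nonempty, the natural projection $\prod_I A_i\to\prod_J A_i$ is surjective; hence $h$ factors through it if and only if $h(x)=h(y)$ whenever $x$ and $y$ agree on $J$. I will freely rephrase membership in $\F$ this way.

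For the forward direction, condition (i) of Definition~\ref{D.filter&&} is immediate, taking the trivial factorization through $\prod_I A_i$ itself. Condition (ii) follows by composing a factorization $\prod_I A_i\to\prod_J A_i\to C$ with the natural projection $\prod_K A_i\to\prod_J A_i$ when $J\subseteq K$. The interesting case is (iii). Given $J,K\in\F$ and $x,y\in\prod_I A_i$ agreeing on $J\cap K$, I would build a ``hybrid'' $z\in\prod_I A_i$ by setting $z_i=x_i$ for $i\in J$, $z_i=y_i$ for $i\in K\setminus J$, and $z_i$ arbitrary for $i\notin J\cup K$. Then $z$ agrees with $x$ on $J$, so $h(x)=h(z)$, and the hypothesis on $J\cap K$ ensures $z$ agrees with $y$ on $K$, so $h(z)=h(y)$. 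Hence $h(x)=h(y)$, and $J\cap K\in\F$.

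For the converse, given a filter $\F$, I would take the most natural construction: define $x\sim_\F y$ on $\prod_I A_i$ to mean $\{i\in I\mid x_i=y_i\}\in\F$. Reflexivity uses~(i), symmetry is obvious, and transitivity follows because the intersection of two sets in $\F$ lies in $\F$ and is contained in the agreement set of the outer pair, which then lies in $\F$ by~(ii). Let $C=(\prod_I A_i)/{\sim_\F}$ with $h$ the quotient map. If $J\in\F$, then agreement on $J$ forces the agreement set to contain $J$, hence to lie in $\F$, so $x\sim_\F y$ and $h$ factors through $\prod_J A_i$.

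The main obstacle, and the only place the hypothesis $|A_i|>1$ is used, is the reverse inclusion for this constructed $h$: if $h$ factors through $\prod_J A_i$, show $J\in\F$. To force this, I would exhibit explicit witnesses separating $J$ from its complement. Choose $a_i\neq b_i$ in each $A_i$, and define $x_i=a_i$ for all $i$, while $y_i=a_i$ for $i\in J$ and $y_i=b_i$ for $i\notin J$. Then $x|_J=y|_J$, so $h(x)=h(y)$, so $\{i\mid x_i=y_i\}\in\F$; but by construction this set equals $J$ exactly, giving $J\in\F$ as required.
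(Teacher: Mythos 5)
Your proof is correct and follows essentially the same route as the paper: the same two-step ``hybrid element'' argument for closure under intersection (the paper phrases it as changing coordinates outside $J$ and then in $J-K$), and the same reduced product $(\prod_I A_i)/\F$ for the converse, where you simply write out the verification (transitivity of $\sim_\F$, and the two-point witnesses $a_i\neq b_i$ showing the induced filter is exactly $\F$) that the paper declares straightforward.
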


\begin{proof}
It is clear that the set~(\ref{d.A->C})
satisfies conditions~(i) and~(ii) in the first sentence of
Definition~\ref{D.filter&&}.
To see~(iii), note that if $h$ factors through the subproducts
indexed by $J$ and by $K,$ this means $h(a)$ is unaffected
by any change in $a$ that either modifies only coordinates
outside of $J,$ or modifies only coordinates outside of $K.$
Now a change affecting only coordinates outside
$J\cap K$ can be achieved by first modifying coordinates outside
$J,$ then coordinates in $J-K.$
Hence if neither of these alters $h(a),$ the combined change doesn't,
so $J\cap K\in\F,$ as required.

To get the converse, one takes for $C$ the \emph{reduced product}
$(\prod_I A_i)/\F,$ that is, the factor-set
of $\prod A_i$ by the relation making
$(a_i)_{i\in I}\sim (b_i)_{i\in I}$ if $\{i\mid a_i=b_i\}\in\F.$
It is straightforward to check that this is an equivalence relation,
and (remembering that each $A_i$ has more than one
element), that the filter induced by the factor
map $\prod A_i\to(\prod A_i)/\F$ is precisely~$\F.$
\end{proof}

It is easily shown that a filter $\F$ on $I$ is an ultrafilter
if and only if for every $J\subseteq I,$ either
$J\in\F$ or $I-J\in\F;$ equivalently, if and only if
for all $J,\,K\subseteq I,$ if $J\cup K\in\F$
then either $J\in\F$ or $K\in\F;$ and that the filters on $I$
are precisely the \emph{intersections} of sets of ultrafilters.
(In this last statement, I am following the convention that,
among sets of subsets of
$I,$ we regard the set of all subsets, i.e., the improper filter,
as the intersection of the empty family of sets of subsets.
If we did not allow the empty intersection, then the intersections
of ultrafilters would be the \emph{proper} filters on $I.)$

The next result characterizes those filters that are
intersections of \emph{finitely many} ultrafilters.
In the statement, a \emph{partition} of a set means an expression
of it as the union of a family of pairwise disjoint subsets.
We do not require these subsets to be nonempty, so under our definition,
a partition may involve one or more occurrences of the empty set.

\begin{lemma}\label{L.finiteU}
Let $I$ be a set, and $\F$ a filter on $I.$
Then the following conditions are equivalent.
\begin{equation}\begin{minipage}[c]{35pc}\label{d.1_of_inf}
For every partition of $I$ into infinitely many subsets
$J_s$ $(s\in S,$ $S$ an infinite set\textup{)}
there is at least one $s\in S$ such that $I-J_s\in\F.$
\end{minipage}\end{equation}
\begin{equation}\begin{minipage}[c]{35pc}\label{d.1_of_ctb}
For every partition of $I$ into a \emph{countably} infinite
family of subsets $J_m$ $(m\in\omega),$ there is at least one
$m\in\omega$ such that $I-J_m\in\F.$
\end{minipage}\end{equation}
\begin{equation}\begin{minipage}[c]{35pc}\label{d.bdd}
There exists $n\in\omega$ such that for every partition
of $I$ into $n+1$ subsets $J_0,\dots,J_n,$ there is at least one
$m\in n+1$ such that $I-J_m\in\F.$
\end{minipage}\end{equation}
\begin{equation}\begin{minipage}[c]{35pc}\label{d.cap_fin}
$\F$ is the intersection of finitely many ultrafilters on $I.$
\end{minipage}\end{equation}

When these conditions hold, the finite set of
ultrafilters having $\F$ as intersection is unique,
and its cardinality is the least $n$ as in~\textup{(\ref{d.bdd})}.
\end{lemma}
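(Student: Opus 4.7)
The plan is to prove the cycle $(\ref{d.cap_fin}) \Rightarrow (\ref{d.1_of_inf}) \Rightarrow (\ref{d.1_of_ctb}) \Rightarrow (\ref{d.bdd}) \Rightarrow (\ref{d.cap_fin})$ and then address the uniqueness and cardinality claim. Throughout I will call $J \subseteq I$ \emph{big} when $I - J \notin \F$. The implication $(\ref{d.cap_fin}) \Rightarrow (\ref{d.1_of_inf})$ is a pigeonhole: writing $\F = \U_1 \cap \cdots \cap \U_k$, each ultrafilter $\U_i$ contains exactly one part $J_s$ of any given partition, so at most $k$ indices $s$ have $J_s$ in some $\U_i$; for any other $s$ the set $I - J_s$ lies in every $\U_i$, hence in $\F$. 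And $(\ref{d.1_of_inf}) \Rightarrow (\ref{d.1_of_ctb})$ is immediate.

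For $(\ref{d.bdd}) \Rightarrow (\ref{d.cap_fin})$, let $n$ be the least value satisfying $(\ref{d.bdd})$; minimality forces the existence of a partition $P_0, \ldots, P_{n-1}$ of $I$ into $n$ big parts. For each $i$, let $\F_i$ be the filter generated by $\F \cup \{P_i\}$; this is proper since $I - P_i \notin \F$. To see that $\F_i$ is an ultrafilter, given any $Q \subseteq I$ form the $(n+1)$-part partition obtained from $P_0, \ldots, P_{n-1}$ by splitting $P_i$ into $P_i \cap Q$ and $P_i \setminus Q$; by $(\ref{d.bdd})$ some part has complement in $\F$, but this cannot be any of the $P_j$ with $j \ne i$ (they are big), so one of $P_i \cap Q$, $P_i \setminus Q$ is small, and an easy check shows this gives $I - Q \in \F_i$ in the first case and $Q \in \F_i$ in the second. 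Finally $\F = \bigcap_i \F_i$: if $Q \in \bigcap_i \F_i$ then each $P_i \setminus Q$ is small (by the characterization of $\F_i$), so $I \setminus Q = \bigcup_i (P_i \setminus Q)$ is a finite union of small sets, hence small, giving $Q \in \F$.

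The main obstacle is $(\ref{d.1_of_ctb}) \Rightarrow (\ref{d.bdd})$, which I prove contrapositively. Assume $(\ref{d.bdd})$ fails, so for every $n$ there is a partition of $I$ into $n+1$ big parts; each big part lies in some ultrafilter extending $\F$, and disjoint parts lie in distinct such ultrafilters, so the set $\mathcal{S}(\F)$ of ultrafilters containing $\F$ is infinite. I then build a countable big partition by descent: set $R_0 = I$, and given $R_n$ with $\{\U \in \mathcal{S}(\F) : R_n \in \U\}$ infinite, choose two distinct $\U, \U'$ in that family and some $C \in \U \setminus \U'$, and split $R_n = A_{n+1} \sqcup R_{n+1}$ via $A_{n+1} = R_n \cap C$, $R_{n+1} = R_n \setminus C$. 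Both pieces are big (lying in $\U$, resp.\ $\U'$). Since every ultrafilter containing $R_n$ contains exactly one of $A_{n+1}$, $R_{n+1}$, at least one of these two sets is contained in infinitely many ultrafilters from $\mathcal{S}(\F)$; relabel if needed so that this is $R_{n+1}$. The resulting $A_1, A_2, \ldots$ are pairwise disjoint big sets, and absorbing the leftover $I \setminus \bigcup_n A_n$ into $A_1$ (still big, as a superset of a big set) produces a countable partition of $I$ into big parts -- contradicting $(\ref{d.1_of_ctb})$.

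For the final clause: in any decomposition $\F = \U_1 \cap \cdots \cap \U_k$ with the $\U_i$ distinct, every $\U \in \mathcal{S}(\F)$ must coincide with some $\U_i$, since otherwise picking $B_i \in \U \setminus \U_i$ for each $i$ and intersecting gives $B \in \U$ with $I \setminus B \in \bigcap_i \U_i = \F \subseteq \U$, forcing $\emptyset \in \U$. Hence the finite set of ultrafilters is uniquely $\mathcal{S}(\F)$. Its cardinality equals the least $n$ of $(\ref{d.bdd})$: the $\F_i$ constructed in the $(\ref{d.bdd}) \Rightarrow (\ref{d.cap_fin})$ step are $n$ pairwise distinct ultrafilters (distinct because no ultrafilter contains the two disjoint sets $P_i, P_j$), giving $|\mathcal{S}(\F)| \geq n$; and the pigeonhole argument for $(\ref{d.cap_fin}) \Rightarrow (\ref{d.1_of_inf})$, applied with $k = |\mathcal{S}(\F)|$, already establishes $(\ref{d.bdd})$ with bound $k$, so the least $n$ is at most $k$.
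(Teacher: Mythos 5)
Your proof is correct, but it routes the cycle differently from the paper and contains one genuinely different ingredient worth noting. The paper proves (\ref{d.cap_fin})$\Rightarrow$(\ref{d.bdd})$\Rightarrow$(\ref{d.1_of_inf})$\Rightarrow$(\ref{d.1_of_ctb})$\Rightarrow$(\ref{d.cap_fin}), obtaining the last implication contrapositively by the same splitting-and-descent argument you use, and it never needs to build the ultrafilters explicitly: it invokes the fact that every filter is an intersection of some set $U$ of ultrafilters and shows $U$ must be finite. You instead close the cycle with a direct, constructive proof of (\ref{d.bdd})$\Rightarrow$(\ref{d.cap_fin}): taking a maximal partition $P_0,\dots,P_{n-1}$ into ``big'' parts (which exists by minimality of $n$) and checking that each filter generated by $\F\cup\{P_i\}$ is an ultrafilter, with $\F$ their intersection. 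This is a nice alternative -- it exhibits the ultrafilters concretely and simultaneously delivers the lower bound $|\mathcal{S}(\F)|\geq n$ needed for the final cardinality claim, whereas the paper gets that bound by exhibiting a partition into $n$ sets, one in each $\U_m.$ Your descent argument for (\ref{d.1_of_ctb})$\Rightarrow$(\ref{d.bdd}) (contrapositively) and your uniqueness argument are essentially the paper's, just keyed to the set $\mathcal{S}(\F)$ of all ultrafilters over $\F$ rather than to a chosen representing set.

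Two small points of hygiene, neither fatal. In the pigeonhole step you assert that each $\U_i$ contains \emph{exactly} one part of any given partition; for an infinite partition this is false (a nonprincipal ultrafilter on $\omega$ contains no singleton), but your argument only uses ``at most one,'' which is what disjointness gives, so the conclusion stands. And in (\ref{d.bdd})$\Rightarrow$(\ref{d.cap_fin}) the case where the least $n$ is $0$ (so $\F$ is improper) should be noted separately: there is then no partition into $n$ big parts to extract, but $\F$ is the intersection of the empty family of ultrafilters, consistent with the paper's stated convention.
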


\begin{proof}
We shall prove (\ref{d.cap_fin})$\implies$(\ref{d.bdd})%
$\implies$(\ref{d.1_of_inf})$\implies$(\ref{d.1_of_ctb})%
$\implies$(\ref{d.cap_fin}), then the final sentence.

To see (\ref{d.cap_fin})$\implies$(\ref{d.bdd}), let $\F$ be an
intersection of $n$ ultrafilters, and note that two disjoint
sets cannot belong to a common ultrafilter.
Hence in any
partition of $I$ into $n+1$ sets, at least one will belong to none
of our $n$ ultrafilters; hence its complement belongs to all of them,
hence to $\F.$
For (\ref{d.bdd})$\implies$(\ref{d.1_of_inf}), take
$n$ as in~(\ref{d.bdd}), partition the infinite
index-set $S$ of~(\ref{d.1_of_inf}) into $n+1$ nonempty
subsets $S_0,\dots,S_n,$ and for $m=0,\dots,n,$ let
$J_m=\bigcup_{s\in S_m} J_s.$
By~(\ref{d.bdd}), the complement of one of the $J_m$ lies in $\F.$
Hence, taking any $s\in S_m,$ the complement of $J_s,$
an overset of the complement of $J_m,$ also lies in $\F.$
(\ref{d.1_of_inf})$\implies$(\ref{d.1_of_ctb}) is clear.

We shall prove (\ref{d.1_of_ctb})$\implies$(\ref{d.cap_fin})
in contrapositive form,
$\neg\!$(\ref{d.cap_fin})$\implies~\neg\!$(\ref{d.1_of_ctb}):

Since $\F$ is a filter, it is the intersection of a
set $U$ of ultrafilters.
Suppose $U$ were infinite.
Take any two distinct members of $U.$
Then there is a subset of $I$ belonging to one but not to the
other; hence the complement of that subset belongs to
the other ultrafilter.
Every ultrafilter on $I$ must contain one of these two sets,
so at least one of them belongs to infinitely many members of $U.$
Let us write $I_0$ for such a one
(making an arbitrary choice if both do), and let $J_0=I-I_0,$
recalling that this still belongs to at least one member of $U.$

We now repeat the process on $I_0,$ decomposing it into
a subset $I_1$ which belongs to infinitely many members
of $U$ and a complementary subset $J_1$ which belongs
to at least one; then repeat the process on $I_1,$ and so forth.
We thus get a countably infinite family
$J_0,~J_1,\dots$ of disjoint subsets
of $I,$ each of which belongs to a member of $U.$
If $\bigcup J_m\neq I,$ we enlarge one of them,
say $J_0,$ by attaching $I-\bigcup J_m$ to it.
We then have a partition of $I$ into sets
$J_i$ each belonging to a member of~$U.$
Hence none of their complements
belongs to all members of $U,$ i.e., belongs to $\F,$
proving~$\neg\!$(\ref{d.1_of_ctb}).

To get the final sentence, use~(\ref{d.cap_fin})
to write $\F=\U_0\cap\dots\cap\U_{n-1}$ with the $\U_m$ distinct.
For any ultrafilter $\U$ distinct from each of the $\U_m,$
we can find sets $J_m\in\U_m-\U$ $(m=0,\dots,n-1).$
Hence $J_0\cup\dots\cup J_{n-1}$ belongs
to all $\U_m$ but not to $\U,$ showing that $\F\not\subseteq\U.$
Thus any other set of ultrafilters with intersection $\F$
must be a subset of $\{\U_0,\dots,\U_{n-1}\};$ and
reversing the roles of the two sets of ultrafilters, we get equality.
Our proof of~(\ref{d.cap_fin})$\implies$(\ref{d.bdd})
showed that this $n$ can be used as the $n$ of~(\ref{d.bdd}).
On the other hand, the conclusion
of~(\ref{d.bdd}) does not hold for any smaller value than~$n,$
since we can partition $I$ into $n$ sets, one in each $\U_m.$
\end{proof}

(One can get still more conditions equivalent to those of
the above lemma by replacing
the partition of $I$ in each of~(\ref{d.1_of_inf})-(\ref{d.bdd})
either by a family of disjoint subsets $J_s$ of $I,$ or by a family of
sets $J_s$ having union~$I.$
In the former case, one keeps the conclusions as
in~(\ref{d.1_of_inf})-(\ref{d.bdd});
in the latter, one replaces them by statements that the
union of all but one of the sets $J_s$ lies in $\F.$
These conditions are easily shown equivalent
to~(\ref{d.1_of_inf})-(\ref{d.bdd}),
using the observation that the members of any
family of disjoint subsets of $I$ can be enlarged so that
they give a partition, and the members of any family
with union $I$ can be shrunk down to give a partition.
Two more equivalent conditions, of a different flavor, are proved
toward the end of this note, in Lemma~\ref{L.red_finite}.)

The next lemma gives a condition for
the finitely many ultrafilters of Lemma~\ref{L.finiteU}
to be $\!\kappa\!$-complete, for a
specified uncountable cardinal $\kappa.$
We recall that a $\!\kappa\!$-complete ultrafilter on $I$ can be
nonprincipal only if $I$ has cardinality at least some
measurable cardinal $\geq\kappa$ \cite[Proposition~4.2.7]{Ch+Keis}.
(Following \cite{Ch+Keis}, I have worded Definition~\ref{D.filter&&}
so that $\aleph_0$ counts as a measurable cardinal.
I therefore write ``uncountable measurable cardinal''
for what many authors simply call a measurable cardinal.)
It is known that if uncountable measurable cardinals exist,
they are very large, and very rare; in particular, that
if the standard set-theory, ZFC, is consistent, then it
is consistent with the nonexistence of such cardinals
\cite[Chapter~6, Corollary~1.8]{Drake}.
Thus, under weak assumptions on the size of $I,$ or
reasonable assumptions on our set theory, the
ultrafilters of the next lemma must be principal.
In the proof of that lemma, we will use the fact that
an ultrafilter $\U$ on $I$ is $\!\kappa\!$-complete if and only
if for every partition of $I$ into $<\kappa$ subsets, one
of these subsets lies in $\U.$

\begin{lemma}\label{L.kappa}
Let $I$ be a set, $\F$ a filter on $I,$
and $\kappa$ an uncountable cardinal.
Then the following statements are equivalent.
\begin{equation}\begin{minipage}[c]{35pc}\label{d.most_of_<kappa}
For every partition of $I$ into $<\kappa$ subsets $J_s$
$(s\in S),$ there exist finitely many indices
$s_0,\dots,s_{n-1}\in S$ such that
$J_{s_0}\cup\dots\cup J_{s_{n-1}}\in\F.$
\end{minipage}\end{equation}
\begin{equation}\begin{minipage}[c]{35pc}\label{d.cap_fin_k}
$\F$ is the intersection of finitely many $\!\kappa\!$-complete
ultrafilters.
\end{minipage}\end{equation}
\end{lemma}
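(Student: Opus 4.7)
The plan is to prove the two implications separately, in each case reducing to Lemma~\ref{L.finiteU} combined with the ultrafilter characterization of $\!\kappa\!$-completeness recalled immediately before the statement.

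For (\ref{d.cap_fin_k})$\implies$(\ref{d.most_of_<kappa}), I would write $\F=\U_0\cap\dots\cap\U_{n-1}$ with each $\U_i$ a $\!\kappa\!$-complete ultrafilter. Given a partition $(J_s)_{s\in S}$ with $\card\,S<\kappa,$ the $\!\kappa\!$-completeness of each $\U_i$ provides an $s_i\in S$ with $J_{s_i}\in\U_i;$ the finite union $J_{s_0}\cup\dots\cup J_{s_{n-1}}$ then lies in every $\U_i,$ and hence in $\F.$

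For (\ref{d.most_of_<kappa})$\implies$(\ref{d.cap_fin_k}), my first step is to observe that (\ref{d.most_of_<kappa}) already implies condition~(\ref{d.1_of_ctb}) of Lemma~\ref{L.finiteU}, so that $\F$ is automatically an intersection of finitely many ultrafilters, $\F=\U_0\cap\dots\cap\U_{n-1}.$ Indeed, given a countable partition $(J_m)_{m\in\omega}$ of $I,$ which is in particular a partition into $<\kappa$ pieces since $\kappa$ is uncountable, (\ref{d.most_of_<kappa}) yields a finite subunion $J_{m_0}\cup\dots\cup J_{m_{k-1}}\in\F;$ choosing any $m\in\omega-\{m_0,\dots,m_{k-1}\},$ which exists by finiteness, the complement $I-J_m$ is an overset of this finite union, hence lies in $\F.$

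It then remains to verify that each $\U_i$ is $\!\kappa\!$-complete. Applied to any partition of $I$ into $<\kappa$ subsets, (\ref{d.most_of_<kappa}) produces a finite subunion lying in $\F\subseteq\U_i;$ since $\U_i$ is an ultrafilter, at least one of the pieces of that finite union must itself lie in $\U_i,$ which is the ultrafilter characterization of $\!\kappa\!$-completeness recalled before the lemma. I do not anticipate a serious obstacle here: the hypothesis~(\ref{d.most_of_<kappa}) is tuned precisely to feed that characterization, while Lemma~\ref{L.finiteU} does all the work of reducing to finitely many ultrafilters.
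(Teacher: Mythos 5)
Your proof is correct and follows essentially the same route as the paper's: the forward direction is identical, and for the converse you likewise first deduce condition~(\ref{d.1_of_ctb}) of Lemma~\ref{L.finiteU} to get finitely many ultrafilters, then check $\!\kappa\!$-completeness of each via the partition characterization (you argue this last step directly where the paper argues by contradiction, an immaterial difference).
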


\begin{proof}
Assuming~(\ref{d.cap_fin_k}), let
$\F=\U_0\cap\dots\cap\U_{n-1}$ with all $\U_m$ $\!\kappa\!$-complete.
Given a partition of $I$ into sets $J_s$ as in~(\ref{d.most_of_<kappa}),
$\!\kappa\!$-completeness implies that
each $\U_m$ contains one $J_s;$ say $J_{s_m}\in\U_m.$
Thus $J_{s_0}\cup\dots\cup J_{s_{n-1}}\in\F.$

Conversely, assume~(\ref{d.most_of_<kappa}).
Since $\kappa$ is uncountable,~(\ref{d.most_of_<kappa})
applies in particular to countable decompositions, hence
implies~(\ref{d.1_of_ctb}), which
is equivalent to~(\ref{d.cap_fin}),
i.e., to~(\ref{d.cap_fin_k}) without the
specification of $\!\kappa\!$-completeness.
Now suppose some ultrafilter $\U\supseteq \F$ were
not $\!\kappa\!$-complete.
Then there would exist a partition of $I$ into fewer than $\kappa$
subsets $J_s\notin\U.$
The union of any finite subfamily of these is still $\notin\U,$
hence $\notin\F,$ so~(\ref{d.most_of_<kappa}) fails.
This contradiction completes the proof.
\end{proof}

(Incidentally, the condition on a filter $\F$ that one might
naively hope would imply that $\F$ is an intersection of
$\!\kappa\!$-complete ultrafilters -- namely, that $\F$ itself be
$\!\kappa\!$-complete -- definitely does not.
E.g., if $\kappa$ is a regular infinite non-measurable
(hence uncountable) cardinal,
then the filter $\F$ of complements in $\kappa$ of subsets of
cardinality $<\kappa$ is $\!\kappa\!$-complete, but
there are no nonprincipal $\!\kappa\!$-complete ultrafilters
on $\kappa.$
A cardinal $\kappa$ such that every $\!\kappa\!$-complete
filter extends to a $\!\kappa\!$-complete ultrafilter
is called \emph{strongly compact}; cf.\ \cite{JB+MM}.)

Digression: If $U$ is a (not necessarily finite) set of ultrafilters
on a set $I,$ then the four sets
\begin{equation}\begin{minipage}[c]{35pc}\label{d.4}
$\F\,=\,\bigcap_{\,\U\in U}\U,\qquad
\mathcal{G}\,=\,\bigcup_{\,\U\in U}\U,\qquad
\mathcal{H}\,=\,\bigcup_{\,\U\in U}{^\r{c}\kern.08em\U},\qquad
\mathcal{I}\,=\,\bigcap_{\,\U\in U}{^\r{c}\kern.08em\U},$
\end{minipage}\end{equation}
though they do not, in general, uniquely determine $U,$ do
all determine one another.
Indeed, on the one hand, $\F$ and $\mathcal{H}$ are complements
of one another, as are $\mathcal{G}$ and $\mathcal{I}.$
On the other hand, from the fact that for any ultrafilter $\U,$
the complement of $\U$ is
the set of complements of members of $\U$ (in $I),$
one sees that $\mathcal{I}$ is the set of complements of
members of $\F,$ and vice versa.
(This makes each of $\F$ and $\mathcal{G}$ the sets of complements of
members of the other's complement.
It is not hard to show that
each can also be described as the set of subsets of $I$ having
nonempty intersection with all members of the other.
Likewise, $\mathcal{H}$ and $\mathcal{I},$
in addition to being the sets of complements of members of
each other's complements, are each the set of
subsets of $I$ whose union with every member of the other
is a proper subset of $I.)$

The description of filters in Definition~\ref{D.filter&&}
translates into equally elementary characterizations of the sorts of
sets that can occur as $\mathcal{G},$ $\mathcal{H}$ and $\mathcal{I}$
in~(\ref{d.4}); and since each set in~(\ref{d.4})
conveys the same information, each of these sorts of
sets can, mutatis mutandis,
serve the same mathematical function as filters.
Sets of the form $\mathcal{I}$ are called \emph{ideals} of
subsets of $I,$ since they are the ideals in the Boolean ring
of all its subsets.
Sets having the form $\mathcal{G}$
were named \emph{grills} in \cite{choquet} (cf.\ \cite{thron}), and
are sometimes used under that name in topological contexts.

When I first obtained the results of this note, I formulated
them in terms of finite unions $\mathcal{G}$ of ultrafilters.
I finally realized that what I was doing could be restated
in terms of filters, and rewrote the note accordingly, since
filters are the most familiar of these four sorts of structures.

\section{Ultrafilters, and maps on direct products}\label{S.A->C}

Suppose, as in Lemma~\ref{L.A->C}, that $h: A = \prod_I A_i\to C$
is a map on a direct product of nonempty sets, and
$\F$ the filter of subsets of $I$ corresponding to
those sub-products through which the map factors.
Thus, $h$ factors in a natural way through the canonical
map $A\to A/\F,$
where $A/\F$ denotes the reduced product of the $A_i$ with respect
to $\F,$ defined in the last paragraph of the proof of that lemma.
(The factoring map $A/\F\to C$ is not in general one-to-one.
E.g., if $I=\{0,1\},$ $A_0=A_1=C$ is a nontrivial abelian group $G,$
and $h:G\times G\to G$ its group operation,
then $\F$ is the trivial filter $\{I\},$
so $A\to A/\F$ is an isomorphism, hence $A/\F\to C$ is not one-to-one.)

Now suppose we write the filter $\F$ as $\bigcap_{\,\U\in U} \U$
for $U$ some set of ultrafilters on $I.$
Can we similarly factor $h$ through the natural map
$A\to\prod_{\U\in U}A/\U$?

Yes; but this time not, in general, in a natural way.
Elements of $A$ fall together in $\prod_{\U\in U}A/\U$ if and only if
they fall together in $A/\F,$ but $\prod_{\U\in U}A/\U$ is
typically much larger than the embedded image of $A/\F.$
One can extend the induced map from the image of $A/\F$ to $C$
to a map on all of $\prod A/\U$ by letting
it act in arbitrary ways on elements not in
that image; but there is no guarantee that such an extension
can be made to respect further structure on our sets,
e.g., structures of group or of algebra.

Let us now show, however, that in the context of
Lemmas~\ref{L.finiteU} and~\ref{L.kappa}, where we have
only finitely many ultrafilters, the image of $A,$ and
hence of $A/\F,$ is the full product $\prod_{\U\in U} A/\U,$ so
that the above problem does not arise.

\begin{lemma}\label{L.surj}
Let $I$ be a set, $(A_i)_{i\in I}$ an $\!I\!$-tuple of nonempty sets,
and $\U_0,\dots,\U_{n-1}$ $(n\in\omega)$ distinct ultrafilters on $I.$
Then the natural map
$A=\prod A_i\to A/\U_0\times\dots\times A/\U_{n-1}$ is surjective;
equivalently, the natural
embedding $A/(\U_0\cap\dots\cap\U_{n-1})\hookrightarrow
A/\U_0\times\dots\times A/\U_{n-1}$ is a bijection.
\end{lemma}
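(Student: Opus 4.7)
The plan is to prove surjectivity of $A \to \prod_m A/\U_m$ by a gluing argument: given representatives $a^{(m)} \in A$ of prescribed classes $\bar a_m \in A/\U_m$, I will paste them together along a partition of $I$ consisting of pairwise disjoint sets $I_m \in \U_m$. For the ``equivalently'' clause, I will just note that the map $A/(\U_0 \cap \dots \cap \U_{n-1}) \to \prod_m A/\U_m$ is automatically injective (two tuples coincide in every $A/\U_m$ iff their equalizer lies in every $\U_m$, iff it lies in $\U_0 \cap \dots \cap \U_{n-1}$), so bijectivity of that embedding is exactly surjectivity of the original map.

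The heart of the proof is thus the construction of pairwise disjoint sets $I_0,\dots,I_{n-1}$ with $I_m \in \U_m$. For each ordered pair $m \neq m'$, distinctness of the ultrafilters lets me choose $K_{m,m'} \in \U_m$ with $K_{m,m'} \notin \U_{m'}$, hence $I - K_{m,m'} \in \U_{m'}$. I then set
\[
I_m \;=\; \Bigl(\bigcap_{m' \neq m} K_{m,m'}\Bigr) \;\cap\; \Bigl(\bigcap_{m' \neq m} (I - K_{m',m})\Bigr).
\]
Each factor is in $\U_m$, so $I_m \in \U_m$. For $m \neq m'$, any point of $I_m$ lies outside $K_{m',m}$, while any point of $I_{m'}$ lies inside $K_{m',m}$, so $I_m \cap I_{m'} = \emptyset$. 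An alternative, arguably cleaner, route is induction on $n$: assuming surjectivity for $n-1$ ultrafilters, lift $(\bar a_0,\dots,\bar a_{n-2})$ to some $b \in A$, pick a representative $c$ of $\bar a_{n-1}$, choose $J_m \in \U_{n-1} - \U_m$ for $m < n-1$, let $J = \bigcap_m J_m \in \U_{n-1}$ (so $I - J \in \U_m$ for every $m < n-1$), and modify $b$ to agree with $c$ on $J$; this changes nothing modulo $\U_0,\dots,\U_{n-2}$ and produces agreement with $c$ modulo $\U_{n-1}$.

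With the disjoint sets (or with the inductive step) in hand, the surjectivity argument is a one-liner: given $(\bar a_0,\dots,\bar a_{n-1})$, choose representatives $a^{(m)}$, and define $b \in A$ by $b_i = a^{(m)}_i$ for $i \in I_m$ (and arbitrarily on $I - \bigcup_m I_m$, which is nonempty only incidentally); then $b$ agrees with $a^{(m)}$ on $I_m \in \U_m$, so its image in $A/\U_m$ is $\bar a_m$ for every $m$.

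The only nontrivial step is producing the disjoint $I_m$'s (or, equivalently, executing the inductive step). Everything else — the gluing, and the observation that the reduced-product embedding into the finite product is automatically injective — is routine. I expect no other obstacles.
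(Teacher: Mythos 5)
Your proposal is correct and follows essentially the same route as the paper: choose pairwise disjoint sets $I_m\in\U_m$ (the paper simply asserts the existence of a partition $I=J_0\cup\dots\cup J_{n-1}$ with $J_m\in\U_m$, which you spell out explicitly), glue the chosen representatives along these sets, and observe that injectivity of $A/(\U_0\cap\dots\cap\U_{n-1})\hookrightarrow\prod_m A/\U_m$ is automatic. Both your explicit $K_{m,m'}$ construction and your inductive variant are sound; they just supply the detail the paper leaves to the reader.
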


\begin{proof}
Since the $\U_m$ are distinct, we can find a
partition $I=J_0\cup\dots\cup J_{n-1}$ with each $J_m\in\U_m.$
Now given any
$(x_0,\dots,x_{n-1})\in A/\U_0\times\dots\times A/\U_{n-1},$
let us choose a representative $a^{(m)}\in A$ of each $x_m,$
and let $a\in A$ be the element which agrees on
each $J_m$ with $a^{(m)}.$
This will map to $(x_0,\dots,x_{n-1})$ in
$A/\U_0\times\dots\times A/\U_{n-1},$ as desired, proving the first
assertion.
The equivalence of this with the second assertion is clear.
\end{proof}

In most of the remainder of this note,
the $A_i$ and $C$ of Lemma~\ref{L.A->C} will have, inter alia, group
structures (usually abelian and written additively).
In this situation, let us define the {\em support} of
an element $a=(a_i)_{i\in I}\in A$ as the set
\begin{equation}\begin{minipage}[c]{35pc}\label{d.supp}
$\r{supp}(a)\ =\ \{i\in I\mid a_i\neq 0\}$\quad (or if our groups
are written multiplicatively, $\{i\in I\mid a_i\neq e\}).$
\end{minipage}\end{equation}

Then we can make the natural identifications,
\begin{equation}\begin{minipage}[c]{35pc}\label{d.prod_J}
For $J\subseteq I,$ we identify the subalgebra
$\{a\in\prod_{i\in I} A_i\mid\r{supp}(a)\subseteq J\}\subseteq A$
with $\prod_{i\in J} A_i.$
\end{minipage}\end{equation}

Note that for $a,\,a'\in A,$ the set of indices at which
these two elements differ can be described as the support
of $a-a'$ (respectively $aa'^{-1}).$
Combining this observation with the identification~(\ref{d.prod_J}),
we see that in the
context of Lemma~\ref{L.A->C}, if the $A_i$
and $C$ are groups and $h$ a homomorphism, then~(\ref{d.A->C}) becomes
\begin{equation}\begin{minipage}[c]{35pc}\label{d.{supp}}
$\F\ =
\ \{J\subseteq I\ \mid\ \prod_{i\in I-J} A_i\,\subseteq\,\r{ker}(h)\}.$
\end{minipage}\end{equation}

Let us now apply Lemma~\ref{L.finiteU} to the above situation.
We could give a translation of each of
conditions~(\ref{d.1_of_inf})-(\ref{d.cap_fin}),
but for brevity, we focus on~(\ref{d.1_of_ctb}) and~(\ref{d.cap_fin}).

\begin{corollary}[to Lemma~\ref{L.finiteU}]\label{C.finiteU}
Suppose $I$ is a set, $(A_i)_{i\in I}$ a family of groups,
$C$ a group, and $h:A=\prod_I A_i\to C$ a group homomorphism.
Then the following conditions are equivalent:
\begin{equation}\begin{minipage}[c]{35pc}\label{d.ann_1_of_ctb}
For every partition of $I$ into a countably infinite
family of subsets $J_m$ $(m\in\omega),$ at least one of the
subgroups $\prod_{i\in J_m} A_i\subseteq A$ lies in $\r{ker}(h).$
\end{minipage}\end{equation}
\begin{equation}\begin{minipage}[c]{35pc}\label{d.f_factors}
The homomorphism $h$ factors
$A\to A/\U_0\times\dots\times A/\U_{n-1}\to C$
\textup{(}where the first arrow is the product of the
quotient maps\textup{)}
for some finite family of ultrafilters $\U_0,\dots,\U_{n-1}$ on $I.$
\end{minipage}\end{equation}

In this situation, the filter $\F$ of~\textup{(\ref{d.{supp}})} is the
intersection of the unique least set of ultrafilters that can be
used in~\textup{(\ref{d.f_factors})}.
\end{corollary}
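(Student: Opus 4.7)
The plan is to reduce everything to Lemma~\ref{L.finiteU} and Lemma~\ref{L.surj} via the dictionary supplied by~(\ref{d.{supp}}). The first observation is that $\F$, as defined in~(\ref{d.{supp}}), is precisely the filter that Lemma~\ref{L.A->C} associates to $h$: when the $A_i$ and $C$ carry group structures and $h$ is a homomorphism, $h$ factors through the projection $\prod_I A_i\to\prod_{i\in J}A_i$ iff $h(a)=h(a')$ whenever $a-a'$ is supported outside $J$, iff $\prod_{i\in I-J}A_i\subseteq\ker(h)$. Under this identification, condition~(\ref{d.ann_1_of_ctb}) is exactly condition~(\ref{d.1_of_ctb}) of Lemma~\ref{L.finiteU} applied to $\F$, so by that lemma it is equivalent to $\F$ being an intersection $\U_0\cap\dots\cap\U_{n-1}$ of finitely many distinct ultrafilters.

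Next I would establish the equivalence with~(\ref{d.f_factors}). For the forward direction, starting from $\F=\U_0\cap\dots\cap\U_{n-1}$, Lemma~\ref{L.surj} gives a bijection $A/\F\to A/\U_0\times\dots\times A/\U_{n-1}$, which is in fact a group isomorphism since the reduced-product equivalence relation is a congruence; and $h$ factors through the group quotient $A\to A/\F$, yielding the factorization in~(\ref{d.f_factors}). For the converse, assume $h$ factors through $A\to A/\U_0\times\dots\times A/\U_{n-1}$. If $J\in\U_0\cap\dots\cap\U_{n-1}$ and $a$ has support in $I-J$, then $\{i:a_i=0\}\supseteq J$ lies in every $\U_m$, so $a$ maps to $0$ in each $A/\U_m$ and hence $h(a)=0$. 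Thus $\F\supseteq\U_0\cap\dots\cap\U_{n-1}$, and because the latter filter satisfies~(\ref{d.1_of_ctb}) by Lemma~\ref{L.finiteU}, so does $\F$, giving~(\ref{d.ann_1_of_ctb}).

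For the uniqueness clause, let $\{\U_0,\dots,\U_{n-1}\}$ be the unique set of ultrafilters with intersection $\F$ promised by the final sentence of Lemma~\ref{L.finiteU}; by the forward direction just given, this set can be used in~(\ref{d.f_factors}). Conversely, for any set $\{\V_0,\dots,\V_{k-1}\}$ of distinct ultrafilters that works in~(\ref{d.f_factors}), the argument of the previous paragraph shows $\F\supseteq\V_0\cap\dots\cap\V_{k-1}$, so each $\U_m\supseteq\F\supseteq\V_0\cap\dots\cap\V_{k-1}$; the final sentence of Lemma~\ref{L.finiteU}, applied this time to the filter $\V_0\cap\dots\cap\V_{k-1}$, then forces $\U_m\in\{\V_0,\dots,\V_{k-1}\}$, so $\{\U_0,\dots,\U_{n-1}\}\subseteq\{\V_0,\dots,\V_{k-1}\}$, establishing minimality. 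I do not anticipate a serious obstacle, since the corollary is largely a translation of Lemma~\ref{L.finiteU} through~(\ref{d.{supp}}); the only subtlety worth remarking on is that the factorization in~(\ref{d.f_factors}) must be one of group homomorphisms, not merely of set maps, which is why it matters that $A/\F$ is a genuine quotient group and that Lemma~\ref{L.surj}'s bijection is a group isomorphism.
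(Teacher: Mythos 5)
Your proposal is correct and follows essentially the same route as the paper: translate the problem via~(\ref{d.{supp}}) into a statement about the filter $\F$, apply Lemma~\ref{L.finiteU} to identify~(\ref{d.ann_1_of_ctb}) with $\F$ being a finite intersection of ultrafilters, use Lemma~\ref{L.surj} (plus the factorization of $h$ through $A\to A/\F$) for the forward direction, and derive $\U_0\cap\dots\cap\U_{n-1}\subseteq\F$ from supports for the converse. The only cosmetic differences are that in the converse you conclude by monotonicity of condition~(\ref{d.1_of_ctb}) where the paper argues that $\F$ must be the intersection of a subset of $\{\U_0,\dots,\U_{n-1}\}$, and you spell out the minimality clause in slightly more detail; both rest on the same facts from Lemma~\ref{L.finiteU}.
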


\begin{proof}
Defining $\F$ by~(\ref{d.{supp}}), equivalently,
by~(\ref{d.A->C}), Lemma~\ref{L.A->C} tells us that $\F$
is a filter on $I.$
Condition~(\ref{d.ann_1_of_ctb}) then translates to~(\ref{d.1_of_ctb}),
which by Lemma~\ref{L.finiteU} is equivalent to~(\ref{d.cap_fin}),
i.e., the condition that $\F$ is an
intersection of finitely many ultrafilters, $\U_0\cap\dots\cap\U_{n-1}.$
Let us show that such an expression for $\F$ is equivalent to a
factorization of $h$ as in~(\ref{d.f_factors}).

On the one hand, if $\F=\U_0\cap\dots\cap\U_{n-1},$
then by Lemma~\ref{L.surj} and the discussion preceding it,
$h$ has the desired factorization (as a group homomorphism).
Conversely, given~(\ref{d.f_factors}),
an element of $A$ whose support lies in none of the $\U_m$ will
belong to $\r{ker}(h),$ hence by~(\ref{d.{supp}}),
$\U_0\cap\dots\cap\U_{n-1}\subseteq\F.$
Hence, as in the last paragraph of
the proof of Lemma~\ref{L.finiteU},
$\U_0,\dots,\U_{n-1}$ are the only ultrafilters that contain
$\F;$ so since we know it is an intersection of ultrafilters,
it must be the intersection of some subset of this finite family.
Hence it is, as required, a finite intersection of ultrafilters.
By the final sentence of Lemma~\ref{L.finiteU}, the resulting
set of ultrafilters is unique, and we get
the final sentence of the present lemma.
\end{proof}

Combining the above with Lemma~\ref{L.kappa}, we likewise get

\begin{corollary}[to Lemma~\ref{L.kappa}]\label{C.kappa}
Suppose $I$ is a set, $(A_i)_{i\in I}$ a family of groups,
$C$ a group, $h:A=\prod_I A_i\to C$ a group homomorphism,
and $\kappa$ an uncountable cardinal.
Then the following conditions are equivalent:
\begin{equation}\begin{minipage}[c]{35pc}\label{d.ann_most_of_<kappa}
For every partition of $I$ into $<\kappa$ subsets $J_s$
$(s\in S),$ there exist finitely many indices
$s_0,\dots,s_{n-1}\in S$ such that
$\prod_{i\in I-J_{s_0}\cup\dots\cup J_{s_{n-1}}}A_i\subseteq
\r{ker}(h).$
\end{minipage}\end{equation}
\begin{equation}\begin{minipage}[c]{35pc}\label{d.f_factors+kappa}
The homomorphism $h$ factors
$A\to A/\U_0\times\dots\times A/\U_{n-1}\to C$ for some finite family
of $\!\kappa\!$-complete ultrafilters $\U_0,\dots,\U_{n-1}$ on $I.$
Hence, if $\card(I)$ is less than every
$\!\kappa\!$-complete measurable cardinal \textup{(}in particular,
if no such cardinals exist\textup{)}, then $h$ factors through the
projection of $A$ to the product of finitely many of the $A_i.$
\end{minipage}\end{equation}

Again, the $\F$ of~\textup{(\ref{d.{supp}})} is the intersection
of the least family of $\!\kappa\!$-complete ultrafilters
as in~\textup{(\ref{d.f_factors+kappa})}.\qed
\end{corollary}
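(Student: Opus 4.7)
The plan is to follow the template of the proof of Corollary~\ref{C.finiteU}, substituting Lemma~\ref{L.kappa} for Lemma~\ref{L.finiteU} and keeping track of the $\!\kappa\!$-completeness condition throughout. First, I would define $\F$ by~(\ref{d.{supp}}); by Lemma~\ref{L.A->C} this is a filter on $I.$ Under the identification~(\ref{d.{supp}}), condition~(\ref{d.ann_most_of_<kappa}) of the corollary is literally the translation of condition~(\ref{d.most_of_<kappa}) of Lemma~\ref{L.kappa}: the inclusion $\prod_{i\in I-(J_{s_0}\cup\dots\cup J_{s_{n-1}})}A_i\subseteq\r{ker}(h)$ says exactly that $J_{s_0}\cup\dots\cup J_{s_{n-1}}\in\F.$ Hence Lemma~\ref{L.kappa} immediately yields the equivalence of~(\ref{d.ann_most_of_<kappa}) with the assertion that $\F$ is an intersection of finitely many $\!\kappa\!$-complete ultrafilters.

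Next I would show this equivalent to~(\ref{d.f_factors+kappa}) via the two-directional argument used in Corollary~\ref{C.finiteU}. If $\F=\U_0\cap\dots\cap\U_{n-1}$ with each $\U_m$ $\!\kappa\!$-complete, then Lemma~\ref{L.surj} says the natural embedding $A/\F\to A/\U_0\times\dots\times A/\U_{n-1}$ is a bijection; since $h$ already factors through $A/\F$ as a group homomorphism, it factors through the product of the $A/\U_m$ as required. Conversely, given any factorization as in~(\ref{d.f_factors+kappa}) with $\!\kappa\!$-complete $\U_m,$ any $a\in A$ whose support lies in none of the $\U_m$ is killed in each $A/\U_m,$ and therefore by $h;$ translating via~(\ref{d.{supp}}) this gives $\U_0\cap\dots\cap\U_{n-1}\subseteq\F.$ The uniqueness argument at the end of the proof of Lemma~\ref{L.finiteU} then shows that the only ultrafilters containing $\F$ lie in $\{\U_0,\dots,\U_{n-1}\},$ so $\F$ is the intersection of some subfamily of these---a finite intersection of $\!\kappa\!$-complete ultrafilters.

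For the second sentence of~(\ref{d.f_factors+kappa}), I would invoke the standard fact recalled just before Lemma~\ref{L.kappa}: a $\!\kappa\!$-complete ultrafilter on $I$ can be nonprincipal only when $\card(I)$ is at least some measurable cardinal $\geq\kappa.$ Hence under the given hypothesis on $\card(I),$ each $\U_m$ is principal, say concentrated at some $i_m\in I.$ For a principal ultrafilter one has the canonical identification $A/\U_m\cong A_{i_m}$ via the $i_m$th projection, so the factorization of~(\ref{d.f_factors+kappa}) becomes a factorization of $h$ through the projection $A\to A_{i_0}\times\dots\times A_{i_{n-1}}$ onto a finite subproduct. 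The asserted uniqueness of the least family of $\!\kappa\!$-complete ultrafilters is then immediate from the final sentence of Lemma~\ref{L.finiteU} together with the observation, already used above, that the only ultrafilters on $I$ containing $\F$ are members of this family.

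I do not expect any genuine obstacle: the argument reduces to careful bookkeeping---translating kernel conditions on $h$ into filter conditions on $\F$ via~(\ref{d.{supp}}) and preserving $\!\kappa\!$-completeness---and to noting that Lemma~\ref{L.surj}, though stated set-theoretically, induces a bijection that automatically respects the coordinate-wise group structures on both sides.
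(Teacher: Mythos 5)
Your proposal is correct and follows exactly the route the paper intends: the paper gives no separate proof of this corollary, stating only that it follows "likewise" by combining the proof template of Corollary~\ref{C.finiteU} with Lemma~\ref{L.kappa} in place of Lemma~\ref{L.finiteU}, which is precisely what you carry out. The added details (principal ultrafilters giving $A/\U_m\cong A_{i_m},$ and the translation of support conditions) are all sound.
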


\section{Strengthening results from \cite{prod_Lie2}}\label{S.prod_alg}

If $k$ is a commutative ring (by which we will always
mean a commutative associative unital ring),
then a \emph{$\!k\!$-algebra} (often shortened to ``an algebra''
when there is no danger of ambiguity) will here mean a
$\!k\!$-module $A$ given with a $\!k\!$-bilinear
map $A\times A\to A,$ written as multiplication, but
{\em not} assumed associative, or commutative, or unital.
As in \cite{prod_Lie1} and \cite{prod_Lie2},
for $A$ an algebra, we define its {\em total annihilator ideal} by
\begin{equation}\begin{minipage}[c]{35pc}\label{d.Z(A)}
$Z(A)~\,=~\,\{x\in A\mid xA=Ax=\{0\}\}.$
\end{minipage}\end{equation}

Given algebras $A_i$ $(i\in I)$ and $B,$ and
a surjective homomorphism $f:A=\prod_I A_i\to B,$
N.\,Nahlus and the present author study in \cite{prod_Lie1}
and \cite{prod_Lie2} conditions under which
\begin{equation}\begin{minipage}[c]{35pc}\label{d.f=}
$f$ can be written as the sum,
$f_1+f_0,$ of a $\!k\!$-algebra homomorphism $f_1:A\to B$ that
factors through the projection of $A$ onto the product
of finitely many of the $A_i,$ and a $\!k\!$-algebra
homomorphism $f_0:A\to Z(B).$
\end{minipage}\end{equation}
In particular,
\begin{equation}\begin{minipage}[c]{35pc}\label{d.prod_Lie2}
(From \cite[Theorem~9]{prod_Lie2}) \ Given
a surjective homomorphism $f:A=\prod_I A_i\to B$
of algebras over an infinite field $k,$ a decomposition $f=f_0+f_1$
as in~(\ref{d.f=}) exists if either\\[.1em]
(i)\ \ $\dim_k(B)<\card(k),$ and
$\card(I)\leq\card(k),$ or \\[.1em]
(ii)\,\ $\dim_k(B)<2^{\aleph_0},$ and
$\card(I)=\aleph_0,$ or\\[.1em]
(iii)\ $\dim_k(B)$ is finite, and $\card(I)$ is less than
every measurable cardinal $>\card(k).$
\end{minipage}\end{equation}

It occurred to me (while correcting the galley proofs to
\cite{prod_Lie2}!)
that even if $I$ does \emph{not} satisfy one of the cardinality bounds
of~(i)--(iii) above, one can look at partitions
\begin{equation}\begin{minipage}[c]{35pc}\label{d.partition}
$I\ =\ \bigcup_{s\in S} J_s$
\end{minipage}\end{equation}
where $S$ does satisfy that bound,
and apply~(\ref{d.prod_Lie2}) to the resulting product expressions
\begin{equation}\begin{minipage}[c]{35pc}\label{d.prod_prod}
$A\ =\ \prod_{s\in S}\,(\prod_{i\in J_s} A_i).$
\end{minipage}\end{equation}

That approach yielded improvements on~(\ref{d.prod_Lie2}),
culminating in the present note.
The set-theoretic arguments underlying those improvements have been
abstracted in \S\S\ref{S.cap_ultra}-\ref{S.A->C} above.
Combining those with~(\ref{d.prod_Lie2}), we can now get

\begin{theorem}[strengthening of {(\ref{d.prod_Lie2})}]\label{T.>prod_Lie2}
Suppose $k$ is an infinite field, $(A_i)_{i\in I}$ a family
of $\!k\!$-algebras, $B$ a $\!k\!$-algebra, and
$f:A=\prod_I A_i\to B$ a surjective $\!k\!$-algebra homomorphism.
Suppose also either that\\[.2em]
\textup{(i)}\ \ $\dim_k(B)<\card(k),$ or that\\[.2em]
\textup{(ii)}\,\ $\dim_k(B)<2^{\aleph_0}.$

Then the composite homomorphism
\begin{equation}\begin{minipage}[c]{35pc}\label{d.A->->B/Z}
$A\ \to\ B\ \to\ B/Z(B)$
\end{minipage}\end{equation}
can be factored
\begin{equation}\begin{minipage}[c]{35pc}\label{d.factor}
$A\ \to\ A/\U_0\times\dots\times A/\U_{n-1}\ \to\ B/Z(B)
\quad(n\in\omega),$
\end{minipage}\end{equation}
where the $\U_m$ are ultrafilters on $I,$ which in case~\textup{(i)}
are $\!\card(k)^+\!$-complete, and in case~\textup{(ii)}
countably complete.

Thus, if $\card(I)$ is less than every measurable
cardinal $>\card(k)$ in case~\textup{(i)},
or less than every uncountable measurable cardinal in case~\textup{(ii)}
\textup{(}in particular, in either
case, if no such measurable cardinals exist\textup{)},
then~\textup{(\ref{d.A->->B/Z})}
factors through the projection of $A$ to a finite subproduct
$A_{i_0}\times\dots\times A_{i_{n-1}}$ $(i_0,\dots,i_{n-1}\in I).$
In that situation, the original homomorphism $f:A\to B$ can be
written $f=f_1+f_0$ as in\textup{~(\ref{d.f=})}.
\end{theorem}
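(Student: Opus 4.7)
My plan is to apply Corollary~\ref{C.kappa} to the composite $h=\pi_B\circ f:A\to B/Z(B)$, where $\pi_B:B\to B/Z(B)$ is the canonical projection, taking $\kappa=\card(k)^+$ in case~(i) and $\kappa=\aleph_1$ in case~(ii). This yields the factorization~(\ref{d.factor}) with ultrafilters of the claimed completeness as soon as I can verify condition~(\ref{d.ann_most_of_<kappa}) for $h$ and $\kappa$. The device for doing so is exactly the re-bracketing trick described just before the theorem.

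Concretely, given a partition $I=\bigcup_{s\in S}J_s$ with $\card(S)<\kappa$, I set $A'_s=\prod_{i\in J_s}A_i$ and rewrite $A=\prod_{s\in S}A'_s$, then apply the hypothesis~(\ref{d.prod_Lie2}) to the surjection $f:\prod_{s\in S}A'_s\to B$. In case~(i), $\card(S)\le\card(k)$, so~(\ref{d.prod_Lie2})(i) applies. In case~(ii), $\card(S)\le\aleph_0$; if $\card(S)$ is finite, then~(\ref{d.ann_most_of_<kappa}) is trivial by taking all of $S$, while if $\card(S)=\aleph_0$ I apply~(\ref{d.prod_Lie2})(ii). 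In either case I obtain a decomposition $f=f_1+f_0$ with $f_1$ factoring through the projection to $\prod_{s\in S_0}A'_s$ for some finite $S_0\subseteq S$, and with $f_0$ landing in $Z(B)$. Any $a\in A$ supported in $I\setminus\bigcup_{s\in S_0}J_s$ then satisfies $f_1(a)=0$, hence $f(a)=f_0(a)\in Z(B)$, i.e., $h(a)=0$. Listing $S_0=\{s_0,\dots,s_{n-1}\}$, this gives $\prod_{i\in I\setminus(J_{s_0}\cup\dots\cup J_{s_{n-1}})}A_i\subseteq\ker(h)$, which is~(\ref{d.ann_most_of_<kappa}).

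Under the cardinality hypothesis on $\card(I)$ in the theorem, Corollary~\ref{C.kappa} guarantees that each of the resulting $\U_m$ is principal, say concentrated at some $i_m\in I$; then $A/\U_m\cong A_{i_m}$ and~(\ref{d.factor}) becomes a factorization through the projection $A\to A_{i_0}\times\dots\times A_{i_{n-1}}$. To extract the decomposition $f=f_1+f_0$ of~(\ref{d.f=}), I let $e:A\to A$ be the idempotent $k$-algebra endomorphism that zeroes all coordinates outside $\{i_0,\dots,i_{n-1}\}$, set $f_1=f\circ e$, and $f_0=f-f_1$. Then $f_1$ is an algebra homomorphism factoring through the finite subproduct by construction. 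For any $a\in A$, $f_0(a)=f(a-e(a))$, and since $a-e(a)$ is supported outside $\{i_0,\dots,i_{n-1}\}$ the factorization of $h$ gives $h(a-e(a))=0$, so $f_0(a)\in Z(B)$. Finally, expanding $f(a)f(b)=(f_1(a)+f_0(a))(f_1(b)+f_0(b))$ and using $f_0(A)\subseteq Z(B)$ to annihilate every term containing an $f_0$-factor, I conclude $f(ab)=f_1(a)f_1(b)=f_1(ab)$, whence $f_0(ab)=0=f_0(a)f_0(b)$, so $f_0$ is an algebra homomorphism as well.

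The main obstacle, as I see it, is the first step: one must accept re-bracketing $A$ as a product over a possibly very small set $S$ of very large factors $A'_s$, and then recognize that~(\ref{d.prod_Lie2}) is insensitive to what the factors look like, only to their number. Once this is done, the rest is a smooth application of the abstract machinery of \S\ref{S.cap_ultra}--\S\ref{S.A->C}, with only the routine (but slightly fiddly) verification that $f_0:=f-f\circ e$ is actually a homomorphism requiring attention.
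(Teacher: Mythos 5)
Your proposal is correct and follows essentially the same route as the paper: the ``product of products'' re-bracketing to verify condition~(\ref{d.ann_most_of_<kappa}) for the composite $A\to B\to B/Z(B)$, then Corollary~\ref{C.kappa}, then the decomposition $f=f\circ e+(f-f\circ e)$ via projection onto the finite subproduct. The only differences are that you spell out a couple of points the paper leaves implicit (the finitely-many-blocks case under hypothesis~(ii), and the verification that $f_0$ is an algebra homomorphism into $Z(B)$), both correctly.
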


\begin{proof}
In case~(i), let $\kappa=\card(k)^+,$ and in case~(ii),
$\kappa=\aleph_1.$
Then in each case,
given any partition of $I$ into $<\kappa$ subsets $J_s,$
if we write $A$ as the product of products~(\ref{d.prod_prod}),
then~(\ref{d.prod_Lie2}) tells us that $f$ decomposes as
the sum of a map that factors through a subproduct
$\prod_{i\in J_{s_0}\cup\dots\cup J_{s_{n-1}}} A_i,$
and another with image in $Z(B).$
Hence, on composing with the factor
map $B\to B/Z(B),$ we get a factorization of~(\ref{d.A->->B/Z})
through a subproduct
$\prod_{i\in J_{s_0}\cup\dots\cup J_{s_{n-1}}} A_i.$
Corollary~\ref{C.kappa} now gives us everything but
the last sentence of the theorem.

To get that sentence let us
(as in \cite[end of proof of Theorem~9]{prod_Lie2})
define $f_1,$ respectively, $f_0,$
to be the maps $A\to B$ obtained by first projecting $A$ to
$\prod_{i=i_0,\dots,i_{n-1}} A_i,$ respectively,
$\prod_{i\in I-\{i_0,\dots,i_{n-1}\}} A_i,$ regarded as
subalgebras of $A,$ and then
(in each case) composing that projection with $f$ on the left.
We see that these composite maps have the required properties.
\end{proof}

\emph{Remark}: Case~(iii) of~(\ref{d.prod_Lie2}) has disappeared
from the above statement.
That case was obtained in \cite{prod_Lie2} by a different
method from~(i) and~(ii), for which a trick that allows one
to get $\!\kappa\!$-complete ultrafilters was easier to see, resulting
in the measurable-cardinal bound in the statement of~(iii).
However, once~(i) is strengthened as above, the resulting
statement majorizes~(iii).

(But I still find striking the property of
vector spaces underlying case~(iii)
of~(\ref{d.prod_Lie2}), namely \cite[Lemma~7]{prod_Lie2},
which implies that for any linear map $f$ from $k^I$
$(I$ infinite) to a finite-dimensional $\!k\!$-vector-space $V,$ there
exist finitely many
$\!\card(k)^+\!$-complete ultrafilters $\U_0,\dots,\U_{n-1}$
such that for every $I'\subseteq I$ belonging to none
of the $\U_m,$ there is a member of $\r{ker}(f)$ with
support containing $I'.$
In contrast, I do not see any way to strengthen
the results about supports of elements in
kernels of maps $k^I\to V$ for larger-dimensional $V,$
\cite[Lemmas~3 and~5]{prod_Lie2},
which underlie cases~(i) and~(ii) of~(\ref{d.prod_Lie2}),
so as to raise the upper bounds on $\card(I)$ to a measurable cardinal.
To get Theorem~\ref{T.>prod_Lie2}, we had to use results
about \emph{algebra} homomorphisms and their composites
with $B\to B/Z(B),$ proved from those lemmas.)

Case~(i) of Theorem~\ref{T.>prod_Lie2} above also subsumes
\cite[Theorem~19]{prod_Lie1}, a result
which has the same measurable-cardinal bound on $\card(I)$
as in Theorem~\ref{T.>prod_Lie2},
but stronger assumptions on $B$ (countable
dimensionality, plus a chain condition).
\vspace{.3em}

Returning to~(\ref{d.prod_Lie2}), the methods by which that
result was obtained in \cite[\S\S1-4]{prod_Lie2} are extended in
of \cite[\S6]{prod_Lie2} to get an almost parallel result for a direct
product of algebras over a valuation ring, \cite[Theorem~14]{prod_Lie2}.
Since a field may be regarded as a valuation ring with trivial
value group, the latter result formally subsumed the former
(except that it did not contain a case~(iii)).
However, since the proof was more difficult -- but could be
shortened by referring to aspects of the
earlier proof -- and the statement was
somewhat more complicated, and algebras over fields
are more familiar than algebras over valuation rings,
the two results were stated separately.
Here, likewise, let us state separately our strengthening
of that result.

\begin{theorem}[strengthening of {\cite[Theorem~14]{prod_Lie2}}]\label{T.>prod_Lie2_val}
Let $R$ be a commutative valuation ring with infinite residue
field $k,$ and $f:A=\prod_I A_i\to B$ a surjective
homomorphism from a direct product of $\!R\!$-algebras to an
$\!R\!$-algebra $B$ which is torsion-free as an $\!R\!$-module.
Let us write $\r{rk}_R(B)$ for the rank of $B$
as an $\!R\!$-module; i.e., the common cardinality of all
maximal $\!R\!$-linearly independent subsets of $B.$
Suppose that either\\[.2em]
\textup{(i)}\ \ $\r{rk}_R(B)<\card(k),$ or\\[.2em]
\textup{(ii)}\,\ $\r{rk}_R(B)<2^{\aleph_0}.$

Then the composite homomorphism
\begin{equation}\begin{minipage}[c]{35pc}\label{d.A->->B/Z_val}
$A\ \to\ B\ \to\ B/Z(B)$
\end{minipage}\end{equation}
can be factored
\begin{equation}\begin{minipage}[c]{35pc}\label{d.factor_val}
$A\ \to\ A/\U_0\times\dots\times A/\U_{n-1}\ \to\ B/Z(B),$
\end{minipage}\end{equation}
where the $\U_m$ are ultrafilters on $I,$ which are
$\!\card(k)^+\!$-complete in case~\textup{(i),} and
countably complete in case~\textup{(ii)}.

Thus, again, if $\card(I)$ is less than every measurable
cardinal $>\card(k)$ in case~\textup{(i)},
or less than every uncountable measurable
cardinal in case~\textup{(ii)}, then~\textup{(\ref{d.A->->B/Z_val})}
factors though the projection of $A$ to the product
of finitely many of the $A_i,$ and $f:A\to B$ can
be written as the sum of a homomorphism $f_1: A\to B$ that factors
through these projections, and a homomorphism $f_0: A\to Z(B).$
\end{theorem}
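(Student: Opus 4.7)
The plan is to imitate almost verbatim the proof of Theorem~\ref{T.>prod_Lie2}, substituting \cite[Theorem~14]{prod_Lie2} for~(\ref{d.prod_Lie2}). First I would set $\kappa = \card(k)^+$ in case~(i) and $\kappa = \aleph_1$ in case~(ii). For any partition $I = \bigcup_{s\in S} J_s$ with $\card(S) < \kappa,$ I rewrite $A = \prod_{s\in S}\bigl(\prod_{i\in J_s} A_i\bigr),$ viewed as an $\!S\!$-indexed product of $\!R\!$-algebras. In case~(i) this gives $\card(S) \le \card(k),$ and in case~(ii) $\card(S) \le \aleph_0,$ matching the index-set hypothesis of \cite[Theorem~14]{prod_Lie2}; torsion-freeness of $B$ and the bound on $\r{rk}_R(B)$ transfer unchanged. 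That theorem then yields a decomposition $f = f_1 + f_0$ where $f_1$ factors through a finite subproduct of the $\prod_{i\in J_s} A_i$ and $f_0$ takes values in $Z(B).$

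Composing with $B \to B/Z(B)$ kills $f_0,$ so the composite~(\ref{d.A->->B/Z_val}) factors through $\prod_{i\in J_{s_0}\cup\dots\cup J_{s_{n-1}}} A_i$ for some finite $\{s_0,\dots,s_{n-1}\}\subseteq S.$ Since this holds for every partition of $I$ into fewer than $\kappa$ parts, the composite $A \to B/Z(B)$ satisfies condition~(\ref{d.ann_most_of_<kappa}) of Corollary~\ref{C.kappa}. Invoking that corollary delivers the desired factorization~(\ref{d.factor_val}) through finitely many $\!\kappa\!$-complete ultrafilters on $I,$ which proves everything up through the third displayed assertion.

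For the final sentence, the measurable-cardinal hypothesis forces each $\!\kappa\!$-complete ultrafilter on $I$ to be principal, so~(\ref{d.factor_val}) passes through the projection of $A$ onto a finite subproduct $A_{i_0}\times\dots\times A_{i_{n-1}}.$ I would then define $f_1, f_0: A \to B$ as in the last paragraph of the proof of Theorem~\ref{T.>prod_Lie2}: precompose $f$ with the projections of $A$ onto $\prod_{i\in\{i_0,\dots,i_{n-1}\}} A_i$ and $\prod_{i\in I-\{i_0,\dots,i_{n-1}\}} A_i$ respectively, regarded as subalgebras of $A;$ the fact that the composite of $f_0$ with $B \to B/Z(B)$ vanishes forces $f_0$ to take values in $Z(B),$ and by construction $f_1$ factors through the finite subproduct.

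The main obstacle is not conceptual --- the argument is formally identical to that of Theorem~\ref{T.>prod_Lie2} --- but rather the bookkeeping check that each hypothesis of \cite[Theorem~14]{prod_Lie2} survives the repartition of $A$ as an $\!S\!$-indexed product. Torsion-freeness and the rank bound on $B$ are preserved trivially since $B$ is unchanged, and the cardinality of $S$ satisfies the required bound by our choice of $\kappa;$ so there is nothing delicate to verify beyond matching up hypotheses.
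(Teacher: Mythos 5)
Your proposal is correct and is exactly the paper's intended argument: the paper's proof of this theorem is the one-line remark that it follows from \cite[Theorem~14]{prod_Lie2} just as Theorem~\ref{T.>prod_Lie2} follows from \cite[Theorem~9]{prod_Lie2}, which is precisely the substitution you carry out (product-of-products repartition, Corollary~\ref{C.kappa}, and the same construction of $f_1$ and $f_0$ at the end). Your closing bookkeeping check that torsion-freeness and the rank bound on $B$ survive the repartition is the only content the paper leaves implicit, and you handle it correctly.
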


\begin{proof}
Proved from \cite[Theorem~14]{prod_Lie2} exactly as
Theorem~\ref{T.>prod_Lie2}
is proved from \cite[Theorem~9]{prod_Lie2}.
\end{proof}

\section{Three further applications}\label{S.more_aps}

Subsections~\ref{SS.Los-Eda}-\ref{SS.prod_Lie2} below show how
two results in the literature, which were proved there by arguments
about particular sorts of structures, can be obtained
via the general results of \S\S\ref{S.cap_ultra}-\ref{S.A->C} above.
Subsection~\ref{SS.nonab} notes an analog of the result
of subsection~\ref{SS.prod_Lie2} with $\!k\!$-algebras
replaced by nonabelian groups.

\subsection{The {\L}o\'{s}-Eda Theorem}\label{SS.Los-Eda}
Let $R$ be any associative, unital, not necessarily commutative ring.
By an $\!R\!$-module $M$ we will mean either a
left or a right $\!R\!$-module.
(We will not be writing down the action, so we
do not have to choose between right and
left; but all modules are understood to be on the same side.)

Recall that an $\!R\!$-module $M$ is called \emph{slender}
if every homomorphism from a countable direct
product of $\!R\!$-modules $\prod_{i\in\omega} N_i$
into $M$ annihilates all but finitely many of the $N_i.$
(See \cite{E+L}, \cite[\S\S94-95]{Fuchs}, \cite[Chapter~III]{E+M}.
The classical example is the $\!\Z\!$-module $\Z.)$
In this situation, it is easy to show that such a homomorphism
must in fact factor through the projection to a finite
sub-product, $N_{i_0}\times\dots\times N_{i_{n-1}}$
\cite[Theorem~III.1.2]{E+L}.
Homomorphisms from a direct product of modules indexed by a
{\em not necessarily countable} set to a slender module
have a description analogous to the results of the preceding section:
\begin{equation}\begin{minipage}[c]{35pc}\label{d.Los-Eda}
({\L}o\'{s}-Eda Theorem, \cite{KE}, \cite[Theorem~III.3.2]{E+M})
Any homomorphism from a direct product $N=\prod_{i\in I} N_i$
of $\!R\!$-modules to a slender $\!R\!$-module $M$ factors through
the natural map of $N$ to a finite direct product of
ultraproducts, $N/\U_0\times \dots\times N/\U_{n-1},$ where
$\U_0,\dots,\U_{n-1}$ are countably complete ultrafilters on $I.$
\end{minipage}\end{equation}

This now follows from Corollary~\ref{C.kappa} and
the definition of slender module, via the same ``product of
products'' trick used to deduce the theorems of the preceding section
from results of~\cite{prod_Lie2}.

(For further set-theoretic results about homomorphisms
on direct products of abelian groups, see \cite{JB+MM}.)

\subsection{A result from {\cite{prod_Lie1}}}\label{SS.prod_Lie2}
We shall look next at a result proved by N.\,Nahlus and the
present author in \cite{prod_Lie1}.
The hypothesis was weaker than for the
results of \cite{prod_Lie2} -- no assumption of an infinite
base-field $k,$ and a weaker condition
on the codomain algebra than a bound on its $\!k\!$-dimension -- so we
also get a weaker conclusion, a case of Corollary~\ref{C.finiteU}
rather than Corollary~\ref{C.kappa}.

We need some definitions to formulate the hypothesis.
Given an algebra (defined as in the preceding section)
$B$ over a commutative ring
$R,$ we will say that a pair of ideals $B_0,\,B_1\subseteq B$
are \emph{almost direct factors} of $B$ if they sum to $B,$ and each
is the $\!2\!$-sided annihilator of the other.
We call each such ideal \emph{an almost direct factor of $B.$}
The following three observations are easy.
(The first is a special case of the fact that
the $\!2\!$-sided annihilator of every subset
of $B$ contains $Z(B);$ the second is seen by noting that
if $B_0+B_1=B,$ and $B_0$ annihilates both itself and $B_1,$
then it annihilates $B;$ the third by writing an element
of the annihilator of $B_0+Z(B)$ (resp.\ $B_1+Z(B))$ as $x_0+x_1$
with $x_i\in B_i,$ and noting that $x_0$ (resp.\ $x_1)$
must lie in $Z(B).)$
\begin{equation}\begin{minipage}[c]{35pc}\label{d.ZB_in_almost}
Every almost direct factor of $B$ contains $Z(B).$
\end{minipage}\end{equation}
\begin{equation}\begin{minipage}[c]{35pc}\label{d.not_self}
If an almost direct factor of $B$ is strictly larger than $Z(B),$
it does not annihilate itself.
\end{minipage}\end{equation}
\begin{equation}\begin{minipage}[c]{35pc}\label{d.mut_ann}
Whenever $B$ is the sum of
two mutually annihilating ideals $B_0$ and $B_1,$ the
ideals $B_0+Z(B)$ and $B_1+Z(B)$ are almost direct factors.
\end{minipage}\end{equation}

We shall say that $B$ has \emph{chain condition on almost direct
factors} if every ascending chain of almost direct factors
of $B$ terminates; equivalently,
if every descending chain of such ideals terminates.
(The equivalence follows from the order-reversing
relation between pairs of almost direct factors.)
A trivial but important class of
algebras with chain condition on almost
direct factors are the finite-dimensional algebras over fields.

The result from \cite{prod_Lie1} that we will recover is
\begin{equation}\begin{minipage}[c]{35pc}\label{d.CC}
\cite[part of Proposition~16]{prod_Lie1}\ \ %
If $f: A=\prod_I A_i\to B$ is a surjective homomorphism of
algebras over a commutative ring
$R,$ and $B$ has chain condition on almost direct factors,
then there exist finitely
many ultrafilters $\U_0,\dots,\U_{n-1}$ on $I$ such that
the composite map $A\to B\to B/Z(B)$ factors through the
natural map $A\to A\,/\,\U_0\times\dots\times A\,/\,\U_{n-1}.$
\end{minipage}\end{equation}

To get this, we shall show that the
composite map $A\to B\to B/Z(B)$ satisfies~(\ref{d.ann_1_of_ctb}),
and hence the desired conclusion~(\ref{d.f_factors}).
((\ref{d.f_factors}) only refers to factorization as a map of
abelian groups.
However, the maps $A\to A\,/\,\U_i$ are algebra homomorphisms,
hence we in fact get a factorization as an algebra homomorphism.)

Note that by~(\ref{d.mut_ann}), and the
surjectivity assumption of~(\ref{d.CC}), for any $J\subseteq I$
the ideals $f(\prod_{i\in J} A_i)+Z(B)$
and $f(\prod_{i\in I-J} A_i)+Z(B)$ of $B$ are almost direct factors.

Suppose, now, in contradiction to~(\ref{d.ann_1_of_ctb}), that
we had a partition of $I$ into subsets $J_m$ $(m\in\omega)$
such that none of the ideals $\prod_{J_m} A_i$
belonged to the kernel of $A\to B\to B/Z(B).$
I claim that the chain of almost direct factors
\begin{equation}\begin{minipage}[c]{35pc}\label{d.chain}
$Z(B)\ \subseteq
\ f(\prod_{i\in J_0} A_i)+Z(B)\ \subseteq\ \dots\ \subseteq
\ f(\prod_{i\in J_0\cup\dots\cup J_{n-1}} A_i)+Z(B)\ \subseteq\ \dots$
\end{minipage}\end{equation}
would be strictly increasing.
Indeed, the step where $J_n$ first comes in
cannot equal the preceding step,
because $f(\prod_{i\in J_n} A_i)$ annihilates
the latter, but not the former, by~(\ref{d.not_self}).
This would contradict the chain condition
assumed in~(\ref{d.CC}).
Thus~(\ref{d.ann_1_of_ctb}) holds, as claimed.

\subsection{Nonabelian groups}\label{SS.nonab}
If $K$ is a group, we can similarly call normal subgroups $K_0$
and $K_1$ ``almost direct factors'' of $K$ if each is the centralizer
of the other and their product is all of $K,$ and so define
chain condition on almost direct factors for groups.
The same reasoning as above, with
mutually centralizing normal subgroups in place of mutually
annihilating ideals, and products of normal
subgroups in place of sums of ideals, yields the analogous result.
Namely, letting $Z(K)$ now denote the center of~$K,$
the reader can verify that we get

\begin{proposition}\label{P.CC_gp}
If $f: H=\prod_I H_i\to K$ is a surjective homomorphism of
groups, and $K$ has chain condition on almost direct factors,
then there exist ultrafilters $\U_0,\dots,\U_{n-1}$ on $I$ such that
the composite map $H\to K\to K/Z(K)$ factors through the
natural map $H\to H/\,\U_0\times\dots\times H/\,\U_{n-1}.$\qed
\end{proposition}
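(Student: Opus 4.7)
The plan is to transport the proof of~(\ref{d.CC}) directly into the group setting via the dictionary indicated by the author: replace ``mutually annihilating ideals'' by ``mutually centralizing normal subgroups,'' and ``sums of ideals'' by ``products of normal subgroups.'' First I would verify the group analogs of the three auxiliary observations~(\ref{d.ZB_in_almost})--(\ref{d.mut_ann}). The analog of~(\ref{d.ZB_in_almost}) is immediate, since $Z(K)$ centralizes everything and hence lies in the centralizer of either almost direct factor, so in each almost direct factor. The analog of~(\ref{d.not_self}) follows by noting that if an almost direct factor $K_0$ with complement $K_1$ is abelian, then $K_0$ centralizes both $K_0$ and $K_1,$ hence centralizes $K_0K_1=K,$ so $K_0\subseteq Z(K),$ and together with the previous fact this forces $K_0=Z(K).$ For the analog of~(\ref{d.mut_ann}), I would take $K_0,\,K_1$ normal with $K_0K_1=K$ and $[K_0,K_1]=1$ and check that $C_K(K_1)=K_0Z(K);$ the nontrivial inclusion is obtained by writing $x\in C_K(K_1)$ as $x_0x_1$ with $x_i\in K_i,$ observing that $x_1=x_0^{-1}x$ then centralizes $K_1$ and, being in $K_1,$ also centralizes $K_0,$ so $x_1\in Z(K).$

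With these in hand, the key observation is that for every $J\subseteq I$ the elements of $\prod_{i\in J}H_i$ commute with those of $\prod_{i\in I-J}H_i$ inside $H,$ so by surjectivity of $f$ the normal subgroups $f(\prod_{i\in J}H_i)$ and $f(\prod_{i\in I-J}H_i)$ mutually centralize and have product $K;$ hence $f(\prod_{i\in J}H_i)\,Z(K)$ and $f(\prod_{i\in I-J}H_i)\,Z(K)$ are almost direct factors of $K.$ Next I would argue by contradiction to condition~(\ref{d.ann_1_of_ctb}) as applied to the composite $H\to K\to K/Z(K)$: if some partition $I=\bigcup_{m\in\omega}J_m$ satisfied $f(\prod_{i\in J_m}H_i)\not\subseteq Z(K)$ for all $m,$ then the ascending chain
\[
Z(K)\ \subseteq\ f(\textstyle\prod_{i\in J_0}H_i)\,Z(K)\ \subseteq\ f(\prod_{i\in J_0\cup J_1}H_i)\,Z(K)\ \subseteq\ \cdots
\]
of almost direct factors would be strictly increasing, because at the $n$-th step $f(\prod_{i\in J_n}H_i)$ centralizes the previous term (its index set being disjoint from $J_0\cup\dots\cup J_{n-1}$) but, by the analog of~(\ref{d.not_self}) together with our hypothesis, does not centralize itself, so it cannot already have been contained in that term. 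This contradicts the chain condition on $K,$ so~(\ref{d.ann_1_of_ctb}) holds for $H\to K/Z(K),$ and Corollary~\ref{C.finiteU} yields the promised factorization through $H/\,\U_0\times\dots\times H/\,\U_{n-1};$ the factor maps $H\to H/\,\U_m$ being group homomorphisms, this is automatically a factorization in the category of groups.

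The hard part will be the group-theoretic analog of~(\ref{d.mut_ann}): in the ring setting the conclusion falls out formally from the definitions, whereas in groups one has to pass through $Z(K)$ to close up the centralizers, and the verification that $C_K(K_1Z(K))=K_0Z(K)$ requires the short normal-form computation sketched above. Once that is in place, every remaining step is a transparent translation of the proof of~(\ref{d.CC}), with ``annihilates'' and ``$+$'' replaced throughout by ``centralizes'' and juxtaposition.
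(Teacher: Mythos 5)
Your proposal is correct and is precisely the argument the paper intends: the paper leaves Proposition~\ref{P.CC_gp} to the reader as the word-for-word translation of the proof of~(\ref{d.CC}), with mutually centralizing normal subgroups and products of normal subgroups replacing mutually annihilating ideals and sums of ideals, which is exactly what you carry out (including the group analogs of~(\ref{d.ZB_in_almost})--(\ref{d.mut_ann}) and the strictly increasing chain contradicting the chain condition, followed by Corollary~\ref{C.finiteU}). The only verification not completely formal in the algebra case -- the analog of~(\ref{d.mut_ann}), via writing an element of $C_K(K_1)$ as $x_0x_1$ and showing $x_1\in Z(K)$ -- is handled correctly in your sketch.
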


\section{Further thoughts on the above results}\label{S.thoughts}

The statement and proof of Proposition~\ref{P.CC_gp} above are
exactly modeled on those of~(\ref{d.CC}),
but one proof uses properties specific to nonabelian groups,
the other, properties specific to $\!k\!$-algebras.
Can we set up a general context which embraces these two cases,
and leads to more examples?

Say we are working in a general variety $\V$ of algebras,
in the sense of universal algebra.
We have the minor complication that if $\V$ does not involve
a group structure, we lose the simplification of
interpreting the filter determined by a homomorphism
on a direct product algebra
via its kernel, as in~(\ref{d.{supp}}).
But I don't think this should make a big difference;
we still have~(\ref{d.A->C});
we must simply expect certain statements to involve
twice as many variables as when we have a
group structure, since we must deal with
the condition that two elements fall together under a map,
rather than the condition that one element be in the kernel.

A less trivial problem is what should replace
annihilators in algebras and centralizers in groups.
I think something like the following might work.

Let us understand a \emph{formal relation} in variables
$x_0,\dots,x_{n-1},$ written
\begin{equation}\begin{minipage}[c]{35pc}\label{d.rel}
$R(x_0,\dots,x_{n-1}),$
\end{minipage}\end{equation}
to mean a symbolic equation
\begin{equation}\begin{minipage}[c]{35pc}\label{d.rel12}
$R_0(x_0,\dots,x_{n-1})\ =\ R_1(x_0,\dots,x_{n-1}),$
\end{minipage}\end{equation}
where $R_0$ and $R_1$ are terms in $n$ variables
and the operations of $\V.$

Let us now consider formal relations
$R(x,x';y,y';z_0,\dots,z_{n-1})$ in $n+4$ variables,
which we will mostly abbreviate to $R(x,x';y,y'),$
suppressing the final $n$ variables, with the property that
\begin{equation}\begin{minipage}[c]{35pc}\label{d.R}
$R(x,x,y,y')$ and $R(x,x',y,y)$ are identities of $\V.$
\end{minipage}\end{equation}
Thus, (\ref{d.R}) says that our relation becomes an identity
of $\V$ if \emph{either} the two variables $x$ and $x',$ \emph{or}
the two variables $y$ and $y',$ are set equal.

(For example, if $\V$ is the variety of
$\!k\!$-algebras, then two examples of formal relations
satisfying~(\ref{d.R}) are $(x-x')(y-y')=0$ and $((x-x')z)(y-y')=0.$
In the variety of groups, examples are $[xx'^{-1},\,yy'^{-1}]=e$
and $[x^2x'^{-2},\,(yy'^{-1})^3]=e.$
For an example in a variety not involving a group
structure, we may take for $\V$ the variety of lattices,
and for $R$ the formal relation
\begin{equation}\begin{minipage}[c]{35pc}\label{d.lat_eg}
$(x\vee y)\wedge (x'\vee y')\ =\ (x\vee y')\wedge (x'\vee y).$
\end{minipage}\end{equation}
This last example generalizes to any variety with two
derived operations, denoted $\wedge,$ respectively, $\vee,$
each in two ``distinguished'' variables $x,\,y,$ and
possibly additional variables $z_0,\dots,z_{n-1},$
respectively $w_0,\dots,w_{m-1},$
such that $\wedge,$ but not necessarily $\vee,$ is commutative
in the distinguished variables.)

The following observation generalizes a property of products in
$\!k\!$-algebras, and commutators in groups,
that we used in the last two sections.
\begin{equation}\begin{minipage}[c]{35pc}\label{d.R_prod}
Suppose a formal relation $R(x,x',y,y')$ in the operations of a variety
$\V$ satisfies~(\ref{d.R}).
Then on a direct product $A=A_0\times A_1$ of algebras
in $\V,$ the relation $R(a,a',b,b')$ holds whenever
$a,\,a'$ agree in their $\!A_0\!$-components and
$b,\,b'$ agree in their $\!A_1\!$-components.
\end{minipage}\end{equation}
(Cf.\ the property of $\!k\!$-algebras that
if one element of $A_0\times A_1$ has zero first component,
and another has zero second component, then their product is zero.)
The idea is that such relations should help ``detect''
direct product decompositions.

Now let $\mathcal{R}_\V$ be the set of all formal relations
$R$ satisfying~(\ref{d.R}) in $\V.$
Then for any binary relation $C$ on the underlying set of an
algebra $A\in\V,$ we can define a binary relation $C^\perp,$ by
\begin{equation}\begin{minipage}[c]{35pc}\label{d.C^perp}
$C^\perp\ =\ \{(a,a')\in A\times A \mid$ for all
$R(x,x';y,y';z_0,\dots,z_{n-1})\in\mathcal{R}_\V,$ all pairs of
elements $(b,b')\in C,$ and all choices of $c_0,\dots,c_{n-1}\in A,$
the relation $R(a,a';b,b';c_0,\dots,c_{n-1})$ holds in $A\,\}.$
\end{minipage}\end{equation}

The hope is that this construction, applied to \emph{congruences} $C,$
will play a role analogous to \emph{annihilators of ideals} of
a $\!k\!$-algebra, and \emph{centralizers of subgroups}
of a group, and so allow us to prove a general analog of~(\ref{d.CC})
and Proposition~\ref{P.CC_gp}.
But exactly how this should be done is not clear.
For instance, though one can show that the relation $C^\perp$ defined
in~(\ref{d.C^perp}) will be a subalgebra of $A\times A,$
and as a binary relation it is easily seen to be reflexive and
symmetric, I see no reason why it should be transitive,
and hence a congruence on $A,$ even if $C$ was a congruence.

One can, of course, consider two congruences
$C_0$ and $C_1$ to have a relation analogous to being ``almost direct
factors'' of an algebra or a group if they simultaneously
satisfy $C_0^\perp=C_1$ and $C_1^\perp=C_0,$
are mutually commuting, and have for join the improper congruence.
But to even state
the analog of~(\ref{d.CC}) and Proposition~\ref{P.CC_gp}, one
needs to be able to say that the analog of $Z(B),$ namely
the relation $(B\times B)^\perp$ (where
$B\times B$ is the improper congruence on $B)$ is a congruence.

Perhaps one needs to find additional conditions on the variety $\V$
that make such conclusions hold; and/or replace~(\ref{d.C^perp})
by a construction using, not all of $\mathcal{R}_\V,$ but some
subset $\mathcal{R}$ with appropriate properties.
(Note, however, that for an arbitrary subset
$\mathcal{R}\subseteq\mathcal{R}_\V,$
some of the things I've noted hold for $\mathcal{R}_\V$
may fail: $C^\perp$ need not be symmetric or a subalgebra.)
I leave these ideas for others to investigate.

Incidentally, not all situations to which we have applied the results
of \S\S\ref{S.cap_ultra}-\ref{S.A->C} are
based on ideas like those of ``annihilator'' and ``centralizer'',
whose possible generalization we have just examined.
The facts that $\Z$ and various other modules are slender
are true for (so far as I can see) very different sorts of reasons.

\section{Extending the Erd\H{o}s-Kaplansky Theorem to reduced products}\label{EK}

The Erd\H{o}s-Kaplansky Theorem \cite[Theorem~IX.2, p.247]{NJ}
says that if $D$ is a division ring and $I$ an infinite set,
then $\dim_D D^I=\card(D^I).$
Combining the method of proof of that result with Lemma~\ref{L.kappa}
above, we shall now prove a statement of which that theorem
is essentially the case $\F=\{I\}.$
(``Essentially'' because, to avoid complications,
we here assume $D$ infinite.
The case of finite $D$ will covered by Corollary~\ref{C.finite_D}.)

\begin{theorem}\label{T.E-K}
Let $D$ be an infinite division ring, $I$ a set, and $\F$ a filter
on $I$ which is not the intersection of finitely many
$\!\card(D)^+\!$-complete ultrafilters.
\textup{(}So in particular, $\F$ is not the principal
filter generated by a finite set.\textup{)}
Then
\begin{equation}\begin{minipage}[c]{35pc}\label{d.E-K}
$\dim_D\,D^I/\F\ =\ \card(D^I/\F).$
\end{minipage}\end{equation}
\textup{(}where $\dim_D$ can be taken to mean the
dimension either as a right or as a left vector space\textup{)}.
\end{theorem}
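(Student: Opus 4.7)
The upper bound $\dim_D D^I/\F\le\card(D^I/\F)$ is immediate from the definition of dimension. For the lower bound, I will combine the negation of Lemma~\ref{L.kappa} (to exploit the hypothesis on $\F$) with the standard ingredients of the classical Erd\H{o}s-Kaplansky proof.

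First, by the contrapositive of Lemma~\ref{L.kappa} with $\kappa=\card(D)^+,$ the hypothesis supplies a partition $I=\bigsqcup_{s\in S}J_s$ with $\card(S)\le\card(D)$ such that no finite subunion $J_{s_0}\cup\dots\cup J_{s_{n-1}}$ lies in $\F;$ since $I\in\F,$ the set $S$ must be infinite. Next, I will invoke the standard cardinality--dimension lemma (a main ingredient of the classical Erd\H{o}s-Kaplansky proof): for any nonzero $D$-vector space $V,$ $\card(V)=\max(\dim_D V,\card(D)).$ Applied to $V=D^I/\F,$ this immediately delivers $\dim_D D^I/\F=\card(D^I/\F)$ whenever $\card(D^I/\F)>\card(D).$

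It then remains to treat the case $\card(D^I/\F)=\card(D),$ the reverse inequality $\card(D^I/\F)\ge\card(D)$ coming from the embedding of the constants. Here I must exhibit $\card(D)$ linearly independent elements of $D^I/\F,$ and I will do so by imitating the Vandermonde step of the classical Erd\H{o}s-Kaplansky proof, using the partition above in place of a countable subset of $I.$ Concretely: pick a countably infinite subfamily $\{s_n\}_{n\in\omega}\subseteq S$ and, for each $d$ in a carefully chosen subset of $D$ of cardinality $\card(D),$ define $v_d\in D^I$ by $v_d(i)=d^n$ when $i\in J_{s_n}$ and $v_d(i)=0$ otherwise. The verification that the images of the $v_d$'s are $D$-linearly independent in $D^I/\F$ reduces to showing that, for distinct $d_j$'s and nontrivial $(c_j),$ the ``agreement set'' of $\sum_j c_j v_{d_j}$ with zero --- which takes the form $(I-\bigcup_n J_{s_n})\cup\bigcup_{n:\,\sum_j c_j d_j^n=0}J_{s_n}$ --- cannot lie in $\F;$ this will follow by combining a Vandermonde analysis of the zero-pattern of $n\mapsto\sum_j c_j d_j^n$ with the ``no finite subunion in $\F$'' property of the partition.

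The main obstacle will be this last verification. Two subtleties deserve care: first, ensuring the zero-pattern of $n\mapsto\sum_j c_j d_j^n$ is finite --- in the commutative case this is arranged by choosing the $d$'s so that no pairwise ratio is a root of unity, but in the noncommutative (general division-ring) case a more delicate argument is needed; and second, when $\card(S)>\aleph_0$ (which can happen when $\F$ has strong completeness properties just short of $\card(D)^+$-completeness), the residual $I-\bigcup_n J_{s_n}$ is not automatically controlled by the hypothesis, so one may need either to pass first to a countable coarsening of the partition which still satisfies ``no finite subunion in $\F$'', or to absorb the residual into the definition of the $v_d$'s by a suitable extension.
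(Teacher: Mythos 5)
Your opening reduction coincides with the paper's: the inequality $\dim_D V\leq\card(V),$ the fact that $\card(V)=\max(\dim_D V,\card(D))$ settles the case $\card(D^I/\F)>\card(D),$ and the contrapositive of Lemma~\ref{L.kappa} supplying a partition $(J_s)_{s\in S},$ $\card(S)\leq\card(D),$ no finite subunion of which lies in $\F.$ The gap is in the remaining (and only substantive) step, producing $\card(D)$ independent elements of $D^I/\F.$ Your plan supports the vectors $v_d$ on a \emph{countable} subfamily $\bigcup_{n\in\omega}J_{s_n},$ and this loses exactly the cases that make the theorem nontrivial. Let $\kappa$ be an uncountable measurable cardinal, $I=\kappa,$ $\F=\U$ a nonprincipal $\!\kappa\!$-complete ultrafilter, and $\card(D)\geq\kappa.$ Then $\U$ is not $\!\kappa^+\!$-complete, hence not $\!\card(D)^+\!$-complete, so the hypothesis holds; but $\U$ is countably complete, and since no single $J_{s_n}$ lies in $\U,$ the set $I-\bigcup_{n\in\omega}J_{s_n}=\bigcap_n(I-J_{s_n})$ \emph{does} lie in $\U.$ Hence every one of your $v_d$ maps to $0$ in $D^I/\F,$ and the construction collapses. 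Your first fallback is unavailable in this example: by countable completeness, every countable partition of $I$ has a block (a fortiori a finite subunion) in $\U,$ so no countable coarsening with the ``no finite subunion in $\F$'' property exists. Your second fallback, ``absorb the residual by a suitable extension,'' is precisely the work that remains to be done and is not sketched. (Your other subtlety is also more serious than a side condition: even over a commutative infinite field the geometric-sequence idea can fail outright -- over $\overline{\mathbb{F}}_p$ every nonzero element is a root of unity, so no family with ``no pairwise ratio a root of unity'' of size $\card(k)$ exists, and already $d_0=1,$ $d_1=-1$ gives an exponential sum vanishing on all even $n$ -- and over a noncommutative $D$ Vandermonde theory is more delicate still.)

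The paper's proof avoids both difficulties by strengthening the Jacobson ingredient of the Erd\H{o}s--Kaplansky argument rather than the Vandermonde one: by transfinite recursion over $\card(D)\times\card(D)$ in lexicographic order (at each step fewer than $\card(D)$ values are forbidden, since a missing corner entry makes the relevant matrix singular for exactly one value), it builds an array $(x_{\alpha\beta})_{\alpha,\beta\in\card(D)}$ such that every $n\times n$ matrix $((x_{\alpha_i\beta_j}))$ with increasing indices is nonsingular. Padding the partition to exactly $\card(D)$ blocks $(J_\alpha)_{\alpha\in\card(D)}$ and letting $y_\beta$ be constantly $x_{\alpha\beta}$ on $J_\alpha$ (for \emph{all} $\alpha,$ so there is no uncontrolled residual), a nontrivial combination of $n$ of the $y_\beta$ can vanish on fewer than $n$ blocks, so its zero set is a finite union of blocks, hence not in $\F;$ this gives the required $\card(D)$ independent elements. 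So your proposal is correct in outline up to the point where the theorem's content begins, but the key construction must be replaced by (something like) this $\card(D)\times\card(D)$ strongly independent family; as written, your argument only covers filters admitting a countable partition with no finite subunion in $\F,$ which is strictly weaker than the hypothesis.
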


\begin{proof}
Without loss of generality, let vector spaces
(in particular, $D^I/\F)$ be left vector spaces.

For any vector space $V,$ since a basis for $V$
is a subset of $V,$ we have
\begin{equation}\begin{minipage}[c]{35pc}\label{d.dim_V}
$\dim\ V\ \leq\ \card(V).$
\end{minipage}\end{equation}
For $D$ infinite, as in the present situation, equality is
easily proved in~(\ref{d.dim_V}) if $\card(V)>\card(D).$
(E.g., by \cite[Lemma~X.1, p.245]{NJ}, and the fact
that a product of two infinite cardinals equals the
larger of the two.)
So to prove~(\ref{d.E-K}) it suffices to show that under our
assumptions on $\F,$ if $\card(D^I/\F)=\card(D),$ then
\begin{equation}\begin{minipage}[c]{35pc}\label{d.dim>_card_D}
$\dim_D(D^I/\F)\ \geq\ \card(D).$
\end{minipage}\end{equation}

To do this, let us slightly strengthen \cite[Lemma~IX.2, p.246]{NJ}.
Namely, we shall show that there exists a
$\!\card(D)\times \card(D)\!$-tuple
$((x_{\alpha\beta}))_{\alpha,\beta\in\card(D)}$
of elements of $D$ such that
\begin{equation}\begin{minipage}[c]{35pc}\label{d.strong}
For every natural number $n,$ and
every pair of $\!n\!$-tuples $(\alpha_0,\dots,\alpha_{n-1})$
and $(\beta_0,\dots,\beta_{n-1})$ of elements of $\card(D)$ such
that $\alpha_0<\dots<\alpha_{n-1}$
and $\beta_0<\dots<\beta_{n-1},$ the $n$
elements $(x_{\alpha_i\beta_j})_{i=0,\dots,n-1}\in D^n$
$(j=0,\dots,n-1)$ are linearly independent; equivalently,
the $n\times n$ matrix $((x_{\alpha_i\beta_j}))$ is nonsingular.
\end{minipage}\end{equation}
(Such a family is called \emph{strongly} linearly independent
in \cite[Lemma~X.2, p.246]{NJ}, though there, the index we
call $\beta$ is restricted to a countable range;
i.e., only countable strongly linearly independent families
are considered.)

Mimicking the proof in \cite{NJ},
we choose the elements $x_{\alpha\beta}\in D$
by a recursion over the index-set $\card(D)\times\card(D),$
lexicographically ordered.
Given $\alpha,\,\beta,$ assume recursively that
all $x_{\alpha'\beta'}$ with $(\alpha',\beta')<(\alpha,\beta)$
have been chosen so as to satisfy all cases
of~(\ref{d.strong}) involving only elements with
subscripts $<(\alpha,\beta).$
In particular, for every natural number $n$ and
every pair of increasing $\!n\!$-tuples $(\alpha_0,\dots,\alpha_{n-1})$
and $(\beta_0,\dots,\beta_{n-1})$ with $\alpha_{n-1}=\alpha,$
$\beta_{n-1}=\beta,$ the values $x_{\alpha_i\beta_j}$
other than $x_{\alpha_{n-1}\beta_{n-1}}$ have been chosen;
so we have an $n\times n$ matrix with one entry missing; and
by our recursive assumption, its upper left $n{-}1\times n{-}1$
minor is nonsingular.
In this situation, one sees by linear algebra
that one and only one value of the missing element will
make the matrix singular.
(Indeed, a unique linear combination of the first $n-1$
rows will have first $n-1$ coordinates agreeing with those specified
in the $\!n\!$-th row; and the last coordinate of the resulting row
will be the value of $x_{\alpha\beta}$ in question.)

Now since $\alpha,\,\beta<\card(D),$ there are fewer than
$\card(D)\,\card(D)=\card(D)$ choices for the integer $n$ and the values
$\alpha_0,\dots,\alpha_{n-2}$ and $\beta_0,\dots,\beta_{n-2}$
in the preceding paragraph.
Since each such choice leads to only one value of
$x_{\alpha\beta}$ making the corresponding matrix singular, we may
choose $x_{\alpha\beta}$ so as to make all these matrices nonsingular.
Proceeding recursively, we get values of $x_{\alpha\beta}$ for all
$\alpha,\beta\in\card(D)$ which together satisfy~(\ref{d.strong}).

Now by assumption, our filter $\F$ is not a finite
intersection of $\!\card(D)^+\!$-complete
ultrafilters; so Lemma~\ref{L.kappa} tells us that there
exists a partition of $I$ into $<\card(D)^+,$ i.e.,
$\leq\card(D)$ subsets no finite union of which belongs to $\F.$
If that partition involves fewer than $\card(D)$
sets, let us throw in empty sets to reach that value.
Thus, we can write our partition $(J_\alpha)_{\alpha\in\card(D)}.$
Let us now define elements $y_\beta\in D^I$ $(\beta\in\card(D))$
by the conditions that on each $J_\alpha,$ the element $y_\beta$ has
constant value $x_{\alpha\beta}.$

Then~(\ref{d.strong}) tells us that for any positive integer $n,$ a
nontrivial linear combination of $n$ of these elements
cannot be zero on $n$ of the sets $J_\alpha;$
i.e., its zero-set must be a union of $<n$ of those sets.
So as no union of finitely many $J_\alpha$ belongs
to $\F,$ no nontrivial linear combination of
the $y_\beta$ has zero image in $D^I/\F.$
Thus, we have a $\!\card(D)\!$-tuple of linearly independent
elements of $D^I/\F,$ proving~(\ref{d.dim>_card_D}), as required.
\end{proof}

In contrast, if $\F$ \emph{is} an intersection of $n\geq 0$
$\!\card(D)^+\!$-complete ultrafilters,
then $D^I/\F\cong D^n,$ which has dimension less than its cardinality.
\vspace{.5em}

{\em Remarks for the reader familiar with \cite{prod_Lie2}}:
Let me note how,
with the help of the above theorem, one can strengthen some of the
results of \cite{prod_Lie2} from the case of vector spaces
over a field $k$ to that of vector spaces over a division ring $D.$
We begin with \cite[Lemma~7]{prod_Lie2}.
(I will not to repeat here the statement of that technical
result, but merely note how to extend its proof.
I will, however, recall something of the content of the results
that follow from that lemma.)
One finds that all steps of the proof of that lemma \emph{except}
the paragraph following~\cite[(38)]{prod_Lie2}
go over unchanged to the division ring case.
The result of that paragraph says (after putting $D$ for $k)$ that
a certain ultrafilter $\U$ on a certain set $J$
is $\!\card(D)^+\!$-complete~\cite[(39)]{prod_Lie2}.
This is vacuous if $D$ is finite; to prove it when $D$ is infinite, note
that~\cite[(38)]{prod_Lie2}
says that $D^J/\U$ can be mapped injectively into the
finite-dimensional $\!D\!$-vector-space $g(D^{J_0\cup J})/g(D^{J_0});$
so $D^J/\U$ is finite-dimensional.
Thus, by Theorem~\ref{T.E-K} above, $\U$
is a finite intersection of $\!\card(D)^+\!$-complete ultrafilters,
which, given that it is an ultrafilter, simply says it is
$\!\card(D)^+\!$-complete, as desired.

From this we get the corresponding generalization of
the corollary to that lemma, which in particular tells
us that for $D$ infinite and $I$ a set having cardinality
less than every measurable cardinal $>\card(D),$
every subspace of finite codimension in $D^I$ contains
an element of cofinite support in $I.$

This result, in turn, allows us to generalize
\cite[Theorem~9(iii)]{prod_Lie2}, i.e., roughly, case~(iii)
of~(\ref{d.prod_Lie2}) above, to the situation
where $\!k\!$-algebras $A$ are replaced by $\!(D,D)\!$-bimodules
$A$ given with balanced $\!D\!$-bilinear maps $A\times A\to A.$
In the generalized statement, one should
assume both left and right finite-dimensionality of $B.$
One adapts the supporting result \cite[Lemma~2]{prod_Lie2} by
associating to each $a=(a_i)\in \prod_I A_i$ both the left-vector-space
map $g_a:D^I\to B$ defined by $g_a((u_i))=f((u_i a_i))$
and the right-vector-space
map $g'_a:D^I\to B$ defined by $g'_a((v_i))=f((a_i v_i)).$
In display~(5) of the proof of that lemma, the key step
becomes $f((a_i\,x_i))=f((a_i\,v_i\,v_i^{-1}x_i)),$
and in the dual calculation,
$f((x_i\,a_i))=f((x_i\,u_i^{-1}\,u_i\,a_i)).$

(In this version of \cite[Theorem~9(iii)]{prod_Lie2},
I wonder whether the two-sided finite-dimensionality
assumption can somehow be weakened to one-sided finite-dimensionality,
or even ascending chain condition on
the $\!(D,D)\!$-bimodule structure of~$B.)$ \vspace{.5em}

Returning to Theorem~\ref{T.E-K}, of which, we saw, the only
nontrivial case was when $\card(D^I/\F)=\card(D),$ it is
natural to ask how common this equality is -- in other words, how
common it is for an
infinite set $X$ to satisfy $\card(X^I/\F)=\card(X)$ when
the filter $\F$ is not a finite intersection of
$\!\card(X)^+\!$-complete ultrafilters.

For some values of $\card(X),$ it is indeed common.
For instance, if $\kappa=\lambda^\mu,$ where $\lambda$
and $\mu$ are infinite cardinals, then $\kappa^\mu=\kappa,$
hence for any $X$ of cardinality $\kappa,$ and nonempty $I$
of cardinality $\leq\mu,$ we have $\card(X^I)=\card(X);$
hence for any proper filter $\F$ on $I,$
$\card(X)=\card(X^I)\geq\card(X^I/\F)\geq\card(X)$
(the last inequality
because $X$ embeds diagonally in $X^I/\F),$ giving the desired equality.

On the other hand, if $X$ is countably infinite, we always
get $\card(X^I/\F)\geq 2^{\aleph_0}.$
For, following the idea of the proof of
\cite[Lemma~6]{prod_Lie2}, consider the continuum many functions
$f_r:\omega\to\omega$ given by $f_r(n)=\lfloor r\,n\rfloor$
for positive real numbers $r.$
We see that any two of these agree only at finitely many $n.$
Now given a filter $\F$ on $I$ that is not countably complete, we can,
by the same sort of application of Lemma~\ref{L.kappa} as
in the next-to-last paragraph of
the proof of Theorem~\ref{T.E-K} above, construct from the $f_r$
continuum many functions $y_r:I\to\omega$ that have distinct
images in $\omega^I/\F;$ and using a bijection between $X$
and $\omega,$ we get the asserted conclusion.

I do not know whether one of the above two examples is more ``typical''
than the other.

It is also natural to ask,
when will a reduced power of a finite set be finite?
The answer (and a bit more) is given in

\begin{lemma}\label{L.red_finite}
Let $\F$ be a filter on a set $I,$ and $X$ a finite set
with more than one element.
Then the equivalent conditions of Lemma~\ref{L.finiteU}
\textup{(}in particular, condition\textup{~(\ref{d.cap_fin})},
that $\F$ is the intersection of finitely many ultrafilters on $I)$
are also equivalent to each of
\begin{equation}\begin{minipage}[c]{35pc}\label{d.red_finite}
The reduced power $X^I/\F$ is finite,
\end{minipage}\end{equation}
\begin{equation}\begin{minipage}[c]{35pc}\label{d.red<c}
The reduced power $X^I/\F$ has cardinality $<2^{\aleph_0}.$
\end{minipage}\end{equation}
\end{lemma}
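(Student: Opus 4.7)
The three-way equivalence reduces to two substantive implications once one notes (\ref{d.red_finite})$\Rightarrow$(\ref{d.red<c}) is immediate from ``finite $<2^{\aleph_0}$''. The plan is therefore to prove (\ref{d.cap_fin})$\Rightarrow$(\ref{d.red_finite}) and (\ref{d.red<c})$\Rightarrow$(\ref{d.1_of_ctb}), thereby linking the new conditions into the equivalence cycle of Lemma~\ref{L.finiteU}.

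For (\ref{d.cap_fin})$\Rightarrow$(\ref{d.red_finite}), I would write $\F=\U_0\cap\dots\cap\U_{n-1}$ and invoke Lemma~\ref{L.surj} with every $A_i$ set equal to $X$: it yields a bijection $X^I/\F\cong X^I/\U_0\times\dots\times X^I/\U_{n-1}$. Then I would note that for an \emph{individual} ultrafilter $\U$ on $I$, finiteness of $X$ makes the level sets $\{i\in I\mid x_i=x\}$ ($x\in X$) into a finite partition of $I$, so exactly one of them lies in $\U$; sending the class of $(x_i)$ to that distinguished value gives a well-defined bijection $X^I/\U\cong X$. Hence $X^I/\F\cong X^n$ has cardinality $\card(X)^n$, proving (\ref{d.red_finite}).

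The main content is the reverse direction, which I would argue in the contrapositive form $\neg$(\ref{d.1_of_ctb})$\Rightarrow\neg$(\ref{d.red<c}). Assume (\ref{d.1_of_ctb}) fails, so there is a partition $I=\bigsqcup_{m\in\omega}J_m$ with $I-J_m\notin\F$ for every $m\in\omega$. Fix two distinct elements $x_0,x_1\in X$ (available since $\card(X)>1$) and, for each $S\subseteq\omega$, define $f_S\in X^I$ by $f_S(i)=x_1$ when $i\in J_m$ for some $m\in S$, and $f_S(i)=x_0$ otherwise. This gives $2^{\aleph_0}$ elements of $X^I$, and I claim that distinct $S,\,S'\subseteq\omega$ give distinct classes in $X^I/\F$. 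Indeed, for any $m_0\in S\triangle S'$, the agreement set $\{i\in I\mid f_S(i)=f_{S'}(i)\}=I\setminus\bigcup_{m\in S\triangle S'}J_m$ is contained in $I-J_{m_0}$. If this agreement set were in $\F$, upward closure would force $I-J_{m_0}\in\F$, contradicting the choice of partition. Hence $\card(X^I/\F)\geq 2^{\aleph_0}$, negating (\ref{d.red<c}).

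The only real subtlety is engineering the $2^{\aleph_0}$ encoding so that it separates $\F$-classes, and this is handled cleanly once one observes that the disagreements of $f_S$ and $f_{S'}$ are exactly $\bigcup_{m\in S\triangle S'}J_m$; combined with the failure of each $I-J_m$ to lie in $\F$, no further obstacle arises.
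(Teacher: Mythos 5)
Your proposal is correct and follows essentially the same route as the paper: the forward direction via the (embedding into the) product of ultrapowers together with the fact that an ultrapower of a finite set is just that set, and the reverse direction in contrapositive form using the $2^{\aleph_0}$ functions constant on the blocks $J_m$ of a partition witnessing the failure of~(\ref{d.1_of_ctb}), separated in $X^I/\F$ because each agreement set lies inside some $I-J_m\notin\F$. The only cosmetic difference is that you invoke Lemma~\ref{L.surj} to get an exact count $\card(X)^n$ and spell out the ultrapower fact, where the paper is content with the embedding and cites the fact as well known.
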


\begin{proof}
If~(\ref{d.cap_fin}) holds, with $\F=\U_0\cap\dots\cap\U_{n-1},$
then $X^I/\F$ embeds in
$X^I/\U_0\times\dots\times X^I/\U_{n-1},$ and it is well
known that an ultrapower of a finite set $X$ is isomorphic to $X;$
so $X^I/\F$ is finite, giving~(\ref{d.red_finite})
and hence~(\ref{d.red<c}).

Conversely, if the equivalent conditions of
Lemma~\ref{L.finiteU} fail, then the failure of~(\ref{d.1_of_ctb})
says that we have a partition of $I$ into
subsets $J_m$ $(m\in\omega)$ none of whose complements lies in $\F.$
In this case, let us take two elements $x\neq y\in X,$ and
consider the $2^{\aleph_0}$ elements of $\{x,y\}^I$
which are constant on each subset $J_m.$
If $f$ and $g$ are distinct members of this set, then they
disagree throughout at least one $J_m;$ so as the complement of $J_m$
does not lie in $\F,$ $f$ and $g$ yield distinct elements of $X/\F.$
This contradicts~(\ref{d.red<c}), and hence~(\ref{d.red_finite}).
\end{proof}

\begin{corollary}\label{C.finite_D}
In Theorem~\ref{T.E-K}, the condition that $D$ be infinite
can be dropped.
\end{corollary}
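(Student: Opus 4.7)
The plan is to reduce the finite-$D$ case immediately to Lemma~\ref{L.red_finite} and then apply a standard cardinal arithmetic count. First I would pin down the reading of the hypothesis when $D$ is finite: since $\!\kappa\!$-completeness is defined only for infinite $\kappa,$ and for finite $D$ the natural interpretation of $\card(D)^+$ is $\aleph_0,$ the condition ``$\!\card(D)^+\!$-complete'' collapses to ``$\!\aleph_0\!$-complete,'' which every ultrafilter automatically satisfies. Thus for finite $D$ the hypothesis of Theorem~\ref{T.E-K} simply asserts that $\F$ is not a finite intersection of ultrafilters on $I,$ i.e., that condition~(\ref{d.cap_fin}) of Lemma~\ref{L.finiteU} fails.

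Next I would apply Lemma~\ref{L.red_finite} with $X=D,$ which has more than one element since a division ring contains $0\neq 1.$ The equivalence of~(\ref{d.cap_fin}) with~(\ref{d.red<c}) given by that lemma yields $\card(D^I/\F)\geq 2^{\aleph_0};$ in particular, $V:=D^I/\F$ is infinite.

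Finally I would invoke the standard fact that for an infinite vector space $V$ over a finite division ring $D,$ one has $\dim_D V=\card V.$ Indeed, any basis $B$ of $V$ must be infinite (else $V$ would be finite, being a finite power of the finite set $D$), so that $\card V\leq\aleph_0\cdot\card B=\card B=\dim_D V,$ while $\dim_D V\leq\card V$ is trivial from~(\ref{d.dim_V}). Combining with Theorem~\ref{T.E-K} for the infinite-$D$ case gives the desired conclusion in all cases.

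There is no genuine obstacle here; the only mildly delicate point is settling how the $\!\card(D)^+\!$-completeness condition should be read when $D$ is finite, after which the argument consists of a one-line appeal to Lemma~\ref{L.red_finite} and an elementary counting of a vector space over a finite field.
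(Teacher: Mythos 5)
Your proposal is correct and follows essentially the same route as the paper: for finite $D$ the completeness condition becomes vacuous, Lemma~\ref{L.red_finite} (with $X=D$) shows $D^I/\F$ is infinite, and the standard fact that an infinite vector space over a finite division ring has dimension equal to its cardinality finishes the argument. The only cosmetic difference is that you invoke the $2^{\aleph_0}$ bound~(\ref{d.red<c}) where the paper only needs~(\ref{d.red_finite}).
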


\begin{proof}
Assume $D$ finite.
Thus, the hypothesis that $\F$ is not the intersection
of finitely many $\!\card(X)^+\!$-complete ultrafilters
simply says it is not the intersection of finitely many ultrafilters,
which by Lemma~\ref{L.red_finite} tells us that the vector space
$D^I/\F$ is infinite.
But for an infinite vector space $V$
over a finite field, one indeed has equality in~(\ref{d.dim_V}).
\end{proof}

\end{document}